\documentclass[12pt,a4paper]{amsart}

\usepackage[colorlinks, linkcolor=red,anchorcolor=blue,citecolor=green]{hyperref}
\usepackage{graphics,epic}
\usepackage{amsmath,amssymb, amsthm}
\usepackage[all,2cell]{xy}
\usepackage{times}
\usepackage{caption}
\usepackage{tikz}
\usetikzlibrary{positioning,arrows.meta}
\usetikzlibrary{matrix,calc}
\usetikzlibrary{decorations.pathmorphing}
\usepackage{bbold}

\newtheorem{theorem}{Theorem}[section]
\newtheorem*{theorem*}{Theorem}

\newtheorem{lemma}[theorem]{Lemma}
\newtheorem{proposition}[theorem]{Proposition}

\newtheorem*{conjecture*}{Conjecture}

\theoremstyle{definition}

\newtheorem{example}[theorem]{Example}
\newtheorem{remark}[theorem]{Remark}
\newtheorem{definition}[theorem]{Definition}

\newcommand{\opname}[1]{\operatorname{\mathsf{#1}}}

\renewcommand{\P}{\mathbb{P}}

\newcommand{\id}{\mathbf{1}}

\newcommand{\Aut}{\opname{Aut}}

\begin{document}
	
\title{Are cluster automorphism groups finitely generated?}	
\author{Changjian Fu}
\address{Changjian Fu\\Department of Mathematics\\Sichuan University\\610064 Chengdu\\P.R.China}
\email{changjianfu@scu.edu.cn}
\author{Zhanhong Liang}
\address{Zhanhong Liang\\Department of Mathematics\\Sichuan University\\610064 Chengdu\\P.R.China}
\email{zhanhongliang@stu.scu.edu.cn}
\author{Yinzhi Wang}
\address{Yinzhi Wang\\Department of Mathematics\\Sichuan University\\610064 Chengdu\\P.R.China}
\email{yinzhiwang@stu.scu.edu.cn}
\thanks{}
\subjclass[2020]{13F60}
\keywords{cluster automorphism, finite mutation, acyclic}

\begin{abstract}
This paper investigates the finite generation of cluster automorphism groups. By applying the pseudo $\mathbb{N}$-grading introduced in our previous work, we establish a sufficient condition for a cluster automorphism group to be finitely generated. As applications, we re-establish the finite generation of the automorphism groups for all finite mutation type cluster algebras, and verify the acyclic cases. Furthermore, we illustrate through examples that our approach significantly simplifies the computation of presentations for these groups in certain cases.

\end{abstract}
\maketitle

\section{Introduction}
Cluster algebras were introduced by Fomin and Zelevinsky \cite{FZ02} to provide a combinatorial framework for studying canonical bases in quantum groups and total positivity in algebraic groups. A cluster algebra is a commutative algebra generated by a set of cluster variables, which are organized into overlapping sets of equal finite cardinality known as clusters. Starting from an initial cluster, the entire collection of clusters is generated through a recursive process called mutation.

The cluster automorphism group was introduced by Assem, Schiffler, and Shramchenko \cite{ASS12} to capture the symmetries of a cluster algebra. This group consists of automorphisms that permute the clusters while preserving the mutation structure. This group has proven to be useful in investigating the structural theory of cluster algebras \cite{BM16,BMP20,CL2020}. The cluster automorphism groups have been explicitly determined for several prominent classes of cluster algebras. Assem et al. \cite{ASS12} investigated these groups for acyclic skew-symmetric cluster algebras by leveraging the representation theory of the underlying path algebras, providing explicit computations for the simply-laced Dynkin and Euclidean types. Subsequently, Chang and Zhu \cite{CZ16a} determined these groups for cluster algebras of finite type using a folding technique.

Parallel to these developments, Blanc and Dolgachev \cite{BD15} computed the full automorphism groups for cluster algebras of rank two, which contain the cluster automorphism groups as subgroups. Furthermore, the automorphism groups of cluster algebras arising from marked surfaces have been fully characterized through a series of foundational works \cite{ASS12, Gu11, BS15, DL23}. Such cluster algebras are known to be of finite mutation type \cite{FST12a}. Beyond those originating from surfaces, Fraser \cite{Fr20} obtained presentations for the cluster automorphism groups of two additional cluster algebras of finite mutation type, namely $E_7^{(1,1)}$ and $E_8^{(1,1)}$. Ishibashi \cite{I20} introduced and studied saturated cluster modular groups, which feature cluster automorphism groups as their quotients. He obtained finite presentations for these groups in the cases of types $X_6$ and $X_7$.
The first two authors \cite{FL25} extended the methodology of \cite{ASS12} to develop an elementary framework for computing these groups in specific settings; in particular, they provided a complete classification for cluster algebras of rank 3 with indecomposable exchange matrices (cf. \cite{FT26}). Despite these advances, a universal method for computing the cluster automorphism group of an arbitrary cluster algebra remains elusive.

In this paper, we investigate cluster automorphism groups from a group-theoretic perspective. It has been observed that all previously computed cluster automorphism groups are finitely generated. Our objective is to provide a theoretical explanation for this phenomenon and to present new classes of cluster algebras whose cluster automorphism groups are likewise finitely generated. Specifically, we establish a sufficient condition that guarantees the finite generation of the cluster automorphism group (Lemma \ref{lem:fin-gen}). It is worth noting that all previously known examples satisfy this condition or a variation thereof. As a corollary, we deduce that the cluster automorphism group of any finite mutation type or acyclic cluster algebra is finitely generated (Theorem \ref{thm:main-mutation-finite} and Theorem \ref{thm:main-ayclic}). We remark that while the finite generation of cluster automorphism groups for cluster algebras of finite mutation type was previously established by Fock and Goncharov \cite{FG06} (cf. also \cite[Lemma 2.15]{I20} and \cite[Lemma 10.1]{Fr20} ), our approach is conceptually distinct and offers a novel perspective on identifying generators. In the final section, we provide examples to illustrate how our proof can be applied to calculate cluster automorphism groups for certain cluster algebras.

\subsection*{Acknowledgements}
The authors are grateful to Dr. Chris Fraser for pointing out the work of Fock and Goncharov \cite{FG06} and for providing the reference \cite{Fr20}. They also thank Dr. Tsukasa  Ishibashi for his interest and valuable comments.
This work is partially supported by the National Natural Science Foundation of China (Grant No. 12571040).

\section{Preliminary}
\subsection{Cluster algebra}
In this section, we recall basics of cluster algebras with trivial coefficients. We follow \cite{FZ02,FZ07}. Throughout this section, we fix a positive integer $n$ and let $\mathcal{F}$ be the field of rational function in $n$ variables with coefficients in $\mathbb{Q}$. For a real number $b$, we denote by $[b]_+=\max(b,0)$.

 An integer matrix $B\in {\rm M}_n(\mathbb{Z})$ is {\em skew-symmetrizable} if there exists a diagonal matrix $D=\operatorname{diag}(d_1,...,d_n)$ whose diagonal entries $d_i$ are positive integers such that $DB$ is {\em skew-symmetric}, i.e., $(DB)^T=-DB$.  For a skew-symmetrizable matrix $B=(b_{ij})\in {\rm M}_{n}(\mathbb{Z})$, we associate a weighted directed graph/quiver $\Gamma_B$ to $B$, whose vertex set is $\{1,\ldots,n\}$ and there is an arrow $i\to j$ whenever $b_{ij}>0$, and this arrow is assigned the weight $\sqrt{-b_{ij}b_{ji}}$.
 We say that $B$ is {\em indecomposable} if the directed graph $\Gamma_B$ is connected, and $B$ is {\em acyclic} if the directed graph has no oriented cycles.

%  \begin{definition}
%      Let $B=(b_{ij})$ be an $n\times n$ skew-symmetrizable matrix and $k\in \{1,\ldots,n\}$. The mutation $\mu_k$ in direction $k$ transforms $B$ into a new matrix $\mu_k(B):=(b_{ij}')$ defined as follows:
%      \begin{align}
% b_{ij}^\prime &=\begin{cases}-b_{ij} & i=k\;\text{or}\;j=k;\\
%  b_{ij}+b_{ik}[b_{kj}]_++[-b_{ik}]_+b_{kj}&\text{otherwise.}\end{cases}
% \end{align}
% \end{definition}

\begin{definition}[Seed]
   A {\em labeled seed} (of rank $n$) in $\mathcal{F}$ is a pair $\Sigma=(\mathbf{x},B)$ such that $\mathbf{x}=(x_1,...,x_n)$ is a transcendence basis of $\mathcal{F}$ over $\mathbb{Q}$ and $B$ is an $n\times n$ skew-symmetrizable matrix. We refer to $\mathbf{x}$ and $B$ as the {\em cluster} and {\em exchange matrix} of $\Sigma$, respectively.
\end{definition}
\begin{definition}[Seed mutation]
    For any $k \in \{1,\dots,n\}$, the {\em mutation $\mu_k$ in direction $k$} of the labeled seed $\Sigma=(\mathbf{x},B)$ is a new labeled seed $\Sigma'=(\mathbf{x'},B')$ defined as follows:
\begin{align*}
x'_i&=
    \begin{cases}
    x^{-1}_k\left( \prod\limits^n_{j=1} x^{[b_{jk}]_+}_j + \prod\limits^n_{j=1}x^{[-b_{jk}]_+}_j  \right) &\text{$i=k$};\\
    x_i &\text{else}. 
    \end{cases}\\
    b'_{ij}&=
    \begin{cases}
        -b_{ij} &\text{$i=k$ or $j=k$};\\
        b_{ij}+b_{ik}[b_{kj}]_++[-b_{ik}]_+b_{kj} &\text{else}.
    \end{cases}
\end{align*}
We write $\mu_k(\Sigma):=\Sigma'$ and the matrix $\mu_k(B):=B'$ is also called the {\em mutation of $B$ in direction $k$}.
\end{definition}
It is straightforward to check that $\mu_k$ is an involution, i.e., $\mu_k^2(\Sigma)=\Sigma$.

% We say that two seeds are mutation-equivalent if one can be obtained from the other by finite sequence of mutations. Similarly, one defines the mutation equivalence for clusters and exchange matrices.
% We denote by $\operatorname{Mut}(B)$ the mutation equivalence classes of $B$, i.e., the matrices which can be obtained from $B$ by finite sequence of mutations.
\begin{remark}
    The correspondence $B\mapsto \Gamma_B$ is not one-to-one, but if the skew-symmetrizer $D$ is given, then the matrix $B$ is uniquely determined by $\Gamma_B$. We may transform the mutation of exchange matrices to the mutation of weighted directed graphs, and vice versa; see \cite{FZ03}.
\end{remark}

Let $\mathbb{T}_n$ be the $n$-{\em regular tree} such that each vertex has $n$ edges attached to it, and the edges are labeled by $1,\dots,n$ in such a way that the $n$ edges incident to each vertex have distinct labels. We write $\xymatrix{t\ar@{-}[r]^k&t'}$ to indicate an edge labeled by $k$.

\begin{definition}[Cluster pattern]
A {\em cluster pattern} of rank $n$ in $\mathcal{F}$ is a collection $\mathbf{\Sigma }=\{\Sigma_t=(\mathbf{x}_t,B_t)\}_{t\in\mathbb{T}_n}$ of labeled seeds in $\mathcal{F}$ such that for any edge $\xymatrix{t\ar@{-}[r]^k&t'}$,  $\Sigma_{t^\prime}=\mu_k(\Sigma_t)$. 
% We also refer to $\{B_t\}_{t\in \mathbb{T}_n}$ a $B$-pattern.
\end{definition}
A cluster pattern $\mathbf{\Sigma}$ is uniquely determined by assigning $\Sigma_{t_0}=(\mathbf{x}_{t_0},B_{t_0})$  to the vertex $t_0$. We refer to $t_0$ as the {\em root vertex}, $\mathbf{x}_{t_0}$ as the {\em initial cluster}, and $B_{t_0}$ as the {\em initial exchange matrix} of this cluster pattern.
For a given cluster pattern $\mathbf{ \Sigma }=\{\Sigma_t=(\mathbf{x}_t,B_t)\}_{t\in\mathbb{T}_n}$, we always write
\[
\mathbf{x}_t=(x_{1;t},\dots, x_{n;t}), B_t=(b_{ij;t}), 
\] and for the root vertex $t_0$, we further denote
\[\mathbf{x}:=\mathbf{x}_{t_0}:=(x_1,\dots,x_n), B:=B_{t_0}:=(b_{ij}).
\]
We call $x_{i;t}$ a {\em cluster variable} of $\mathbf{\Sigma}$, and denote by $\mathcal{X}(\mathbf{\Sigma})$ the set of all cluster variables of $\mathbf{\Sigma}$. 
\begin{definition}[Cluster algebra]
    Let  $\mathbf{ \Sigma }$ be a cluster pattern of rank $n$ in $\mathcal{F}$. The {\em cluster algebra} $\mathcal{A}:=\mathcal{A}(\mathbf{\Sigma})$ associated with $\mathbf{\Sigma}$ is the $\mathbb{Z}$-subalgebra of $\mathcal{F}$ generated by $\mathcal{X}(\mathbf{\Sigma})$. We refer to $n$ as the rank of $\mathcal{A}$. We say that $\mathcal{A}$ is {\em indecomposable} if $B_t$ is indecomposable for some $t\in \mathbb{T}_n$, and  $\mathcal{A}$ or $\mathbf{\Sigma}$ is {\em acyclic} if $B_t$ is acyclic for some $t\in \mathbb{T}_n$.
\end{definition}

We conclude this subsection by recalling the definition of the $S_n$-action on labeled seeds, which plays a crucial role in the definition of cluster automorphisms.
\begin{definition}[$S_n$-action]
Let $\Sigma=(\mathbf{x},B)$ be a labeled seed of rank $n$ and $\sigma \in S_n$ be a permutation.
 We define the action of $\sigma$ on $\Sigma$ by
\[
    \sigma \Sigma=(\sigma \mathbf{x},\sigma B),
\]   % 标点符号位置
where $\sigma \mathbf{x}:=(x'_1,\dots,x'_n)$, $\sigma B:=(b'_{ij})^n_{i,j=1}$ are defined by
\[
    x'_i=x_{\sigma^{-1}(i)} \quad \text{and}\quad b'_{ij}=b_{\sigma^{-1}(i)\sigma^{-1}(j)}.
\] %!!

\end{definition}
The $S_n$-action induces an equivalence relation on labeled seeds. For a labeled seed $\Sigma$, we denote $[\Sigma]$ as its equivalence class and refer to it as an {\em unlabeled seed}. For the exchange matrices $B$ and $B'$, we write $B \sim B'$ if there exists $\sigma \in S_n$ such that $B' = \sigma(B)$. The $S_n$-action is compatible with seed mutations.

\begin{lemma}\label{lemma:Sn-action-formula}
Let $\Sigma=(\mathbf{x},B)$ be a labeled seed of rank $n$, $1\leq k \leq n$ and $\sigma\in S_n$. Then $\sigma(\mu_k(\Sigma))=\mu_{\sigma(k)}(\sigma(\Sigma))$.
\end{lemma}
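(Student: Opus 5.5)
The statement is a direct verification from the definitions, so the plan is to compare the two labeled seeds $\sigma(\mu_k(\Sigma))$ and $\mu_{\sigma(k)}(\sigma(\Sigma))$ component by component: first the clusters, then the exchange matrices. Throughout we write $\Sigma'=\mu_k(\Sigma)=(\mathbf{x}',B')$ and $\sigma\Sigma=(\mathbf{y},C)$, where $y_i=x_{\sigma^{-1}(i)}$ and $c_{ij}=b_{\sigma^{-1}(i)\sigma^{-1}(j)}$. Set $k'=\sigma(k)$, so that $\sigma^{-1}(k')=k$.

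\textbf{Clusters.} The $i$-th entry of $\sigma\mathbf{x}'$ is $x'_{\sigma^{-1}(i)}$.
\begin{itemize}
\item If $i\neq k'$, then $\sigma^{-1}(i)\neq k$. Hence this entry equals $x_{\sigma^{-1}(i)}=y_i$, which is also the $i$-th entry of $\mu_{k'}(\mathbf{y})$.
\item If $i=k'$, the entry is $x'_k=x_k^{-1}\bigl(\prod_j x_j^{[b_{jk}]_+}+\prod_j x_j^{[-b_{jk}]_+}\bigr)$. On the other side, the $k'$-th entry of $\mu_{k'}(\sigma\Sigma)$ is $y_{k'}^{-1}\bigl(\prod_j y_j^{[c_{jk'}]_+}+\prod_j y_j^{[-c_{jk'}]_+}\bigr)$. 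Here $y_{k'}=x_k$ and $y_j^{[c_{jk'}]_+}=x_{\sigma^{-1}(j)}^{[b_{\sigma^{-1}(j)k}]_+}$. Reindexing the products by $j\mapsto\sigma^{-1}(j)$, which is a bijection of $\{1,\dots,n\}$, shows the two expressions coincide.
\end{itemize}

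\textbf{Exchange matrices.} The $(i,j)$ entry of $\sigma B'$ is $b'_{\sigma^{-1}(i)\sigma^{-1}(j)}$. The condition "$\sigma^{-1}(i)=k$ or $\sigma^{-1}(j)=k$" is equivalent to "$i=k'$ or $j=k'$".
\begin{itemize}
\item In that case the entry is $-b_{\sigma^{-1}(i)\sigma^{-1}(j)}=-c_{ij}$, which agrees with $\mu_{k'}(C)_{ij}$.
\item Otherwise the entry is
\[
b_{\sigma^{-1}(i)\sigma^{-1}(j)}+b_{\sigma^{-1}(i)k}[b_{k\sigma^{-1}(j)}]_++[-b_{\sigma^{-1}(i)k}]_+b_{k\sigma^{-1}(j)}.
\]
This is $c_{ij}+c_{ik'}[c_{k'j}]_++[-c_{ik'}]_+c_{k'j}=\mu_{k'}(C)_{ij}$.
\end{itemize}

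There is no real obstacle. The only point requiring care is the bookkeeping with $\sigma$ versus $\sigma^{-1}$: one must see that the mutated index moves to $\sigma(k)$, since $\sigma$ places the old $k$-th entry in position $\sigma(k)$. The rest is the reindexing of the products in the exchange relation, which is harmless because it is a bijective relabeling of the factors.
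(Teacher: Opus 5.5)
Your proposal is correct: the entrywise comparison of clusters and exchange matrices, using $\sigma^{-1}(\sigma(k))=k$, $c_{i\sigma(k)}=b_{\sigma^{-1}(i)k}$, and the bijective reindexing of the two monomials in the exchange relation, is exactly the routine verification needed. The paper states this lemma without proof, as the standard compatibility of the $S_n$-action with mutation, so your argument supplies the omitted direct computation and follows the same approach.
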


\subsection{Cluster automorphism}\label{ss:cluster-autom}
In this section, we recall the basics of cluster automorphisms. Throughout, we fix a cluster pattern $\mathbf{\Sigma}$ of rank $n$ with root vertex $t_0$, and let $\mathcal{A} = \mathcal{A}(\mathbf{\Sigma})$ denote the corresponding cluster algebra. 

\begin{definition}[Cluster automorphism]
An automorphism of $\mathbb{Z}$-algebras $f:\mathcal{A}\rightarrow \mathcal{A}$ is called a {\em cluster automorphism} if there exist two seeds $\Sigma=(\mathbf{x},B)$, $\Sigma'=(\mathbf{x}',B') \in \mathbf{\Sigma}$ and a permutation $\sigma \in S_n$ such that the following conditions are satisfied:
\begin{itemize}
    \item[(CA1)] $\sigma(f(\mathbf{x}))=\mathbf{x}'$;
    \item[(CA2)] $\sigma(f(\mu_k(\mathbf{x})))=\mu_{\sigma(k)}(\mathbf{x}')$ for any $k \in \{1,\dots,n\}$.
\end{itemize}
\end{definition}

% For any two seeds $\Sigma=(\mathbf{x},B)$, $\Sigma'=(\mathbf{x}',B')$ and $\sigma \in S_n$, because the elements of each cluster form a transcendence basis  of $\mathcal{F}$, there exists a unique automorphism $f$ of $\mathbb{Z}$-algebras induced by $x_i \longmapsto x'_{\sigma(i)}$. Therefore, (CA1) can be reformulated by $f(\sigma(\mathbf{x}))=\mathbf{x}'$. Furthermore, $f$ is not a cluster automorphism in general since condition (CA2) is not satisfied. In the following proposition, it will point out the equivalent conditions of (CA2).
The following equivalent characterization of cluster automorphisms is useful; see \cite[Lemma 2.3]{ASS12} or \cite[Lemma 3.3]{FL25}.
\begin{proposition}\label{prop:equivalent-def-cluster-auto}
Let $f:\mathcal{A}\rightarrow\mathcal{A}$ be a $\mathbb{Z}$-algebra automorphism. The following conditions are equivalent:
\begin{itemize}
    \item[(1)] $f$ is a cluster automorphism.
    \item[(2)] There exist two seeds $\Sigma=(\mathbf{x},B)$, $\Sigma'=(\mathbf{x}',B')$ and a permutation $\sigma \in S_n$ such that $\sigma(B)=B'$ or $\sigma(B)=-B'$. In this case, $f$ is uniquely determined by $f(x_i)=x_{\sigma(i)}'$ for $1\leq i\leq n$.
    % \[
    % \sigma(f(\mathbf{x}))=\mathbf{x}'; \quad \sigma(B)=B' \;\text{or}\; \sigma(B)=-B'.
    % \]
    % \item[(3)] Let $D=diag(d_1,\dots,d_n)$ be a skew-symmetrizer of $B$. Then there exists two seeds $\Sigma=(\mathbf{x},B)$, $\Sigma'=(\mathbf{x}',B')$ and a permutation $\sigma \in S_n$ such that:
    % \[  
    % \sigma(f(\mathbf{x}))=\mathbf{x}'; \quad \sigma(\Gamma(B))=\Gamma(B') \;\text{or}\; \sigma(\Gamma(B))=(\Gamma(B'))^{op}
    % \]
    % and it holds that $d_i=d_{\sigma(i)}$ for any $i \in \{1,\dots,n\}$.
\end{itemize}
\end{proposition}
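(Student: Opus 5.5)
The plan is to prove the two implications separately, after first fixing the convention that $\mu_k(\mathbf{x})$ in (CA2) means the cluster of the mutated seed $\mu_k(\Sigma)$, computed using the exchange matrix $B$ of $\Sigma$.

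\textbf{(1)$\Rightarrow$(2).} Suppose $f$ is a cluster automorphism with data $\Sigma$, $\Sigma'$, $\sigma$. By (CA1), $f(x_i)=x'_{\sigma(i)}$. Since $f$ is a ring homomorphism, applying it to the exchange relation in direction $k$ of $\Sigma$ gives
\[
f(x_k)\,f(x_k^{\ast})=\prod_j (x'_{\sigma(j)})^{[b_{jk}]_+}+\prod_j (x'_{\sigma(j)})^{[-b_{jk}]_+},
\]
where $x_k^{\ast}$ is the new variable of $\mu_k(\Sigma)$. By (CA2), $f(x_k^{\ast})$ is the new variable of $\mu_{\sigma(k)}(\Sigma')$. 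Its exchange relation reads
\[
x'_{\sigma(k)}\,x'^{\ast}_{\sigma(k)}=\prod_l (x'_l)^{[b'_{l\sigma(k)}]_+}+\prod_l (x'_l)^{[-b'_{l\sigma(k)}]_+}.
\]
Now compare the two right-hand sides in the Laurent polynomial ring in the algebraically independent $x'_l$. Each side is a sum of two monomials with disjoint supports, and there are no coefficients. So equality forces the two pairs of monomials to agree as unordered pairs. Hence for every $k$, the column $(b_{\sigma^{-1}(l)k})_l$ equals $\pm(b'_{l\sigma(k)})_l$, with a sign $\epsilon_k$ that may a priori depend on $k$. One degenerate case needs care: if the column is zero, the sign is undetermined, and I would assign it freely.

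\textbf{Main obstacle: a single global sign.} The key step is to show that $\epsilon_k$ can be taken to be independent of $k$. For this I would use skew-symmetrizability. If $b_{jk}\neq 0$, then $b_{kj}\neq0$ with the opposite sign. Comparing the column condition for $k$ at entry $j$ with the column condition for $j$ at entry $k$, and using that $B'$ is also sign-skew-symmetric, gives $\epsilon_j=\epsilon_k$. Along each connected component of $\Gamma_B$ the sign is therefore constant. On different components, or at isolated vertices, the signs can be chosen independently. So I would replace $\sigma$ by its composite with nothing, but reinterpret $B$ componentwise: flipping the sign on a component is harmless because $B'$ restricted to the image component is determined up to that sign. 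When the statement is read for arbitrary (decomposable) $B$, one works componentwise and notes that the matrix condition only needs to hold up to a global sign. I expect to handle this by assuming indecomposability as in the intended application, or by absorbing componentwise signs. Either way this yields $\sigma(B)=\pm B'$.

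\textbf{(2)$\Rightarrow$(1).} Given seeds with $\sigma(B)=\epsilon B'$, define $f$ on $\mathcal{F}$ by $x_i\mapsto x'_{\sigma(i)}$; this is a field automorphism because both clusters are transcendence bases. Since $\mu_k(-B)=-\mu_k(B)$ and the exchange binomial is symmetric under $B\mapsto -B$, Lemma~\ref{lemma:Sn-action-formula} and induction along $\mathbb{T}_n$ show the following: $f$ sends the seed at distance $m$ from $\Sigma$ via $k_1,\dots,k_m$ to the $\sigma$-relabelled seed reached from $\Sigma'$ via $\sigma(k_1),\dots,\sigma(k_m)$, with exchange matrix multiplied by $\epsilon$. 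Thus $f$ maps $\mathcal{X}(\mathbf{\Sigma})$ bijectively onto itself; the inverse comes from $\sigma^{-1}$. Hence $f$ restricts to a $\mathbb{Z}$-algebra automorphism of $\mathcal{A}$, and (CA1), (CA2) hold by construction. Finally, uniqueness holds because $f$ is determined on a transcendence basis generating $\mathcal{F}$ and $\mathcal{A}\subset\mathcal{F}$.
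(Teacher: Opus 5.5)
Your comparison of the two exchange binomials (column $\sigma(k)$ of $\sigma(B)$ is $\epsilon_k$ times column $\sigma(k)$ of $B'$) is correct. So is your propagation of $\epsilon_k$ along arrows of $\Gamma_B$ via sign-skew-symmetry, and so is your (2)$\Rightarrow$(1). The paper gives no proof of its own; it cites \cite[Lemma 2.3]{ASS12} and \cite[Lemma 3.3]{FL25}, and your core argument is essentially the standard one.

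The gap is the decomposable case. Your fallback of ``absorbing componentwise signs'' cannot be carried out, because (1)$\Rightarrow$(2) is false once two connected components of $\Gamma_B$ contain arrows. So the sentence ``either way this yields $\sigma(B)=\pm B'$'' is wrong, not merely unjustified.

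For example, take $n=4$ with $b_{12}=b_{34}=1$, $b_{21}=b_{43}=-1$ and all other entries $0$ (two copies of $A_2$). Let $f$ swap $x_1,x_2$ and fix $x_3,x_4$.
\begin{itemize}
\item $f$ permutes the five cluster variables of each factor, so it is an automorphism of $\mathcal{A}$.
\item (CA1) and (CA2) hold with $\Sigma=\Sigma'=\Sigma_{t_0}$ and $\sigma=(12)$. For instance, $f$ sends $(1+x_2)/x_1$ to $(1+x_1)/x_2$, the new variable of $\mu_2(\Sigma_{t_0})$.
\item Yet $\sigma(B)$ equals $-B$ on the block $\{1,2\}$ and $B$ on the block $\{3,4\}$.
\end{itemize}
No other choice of seeds helps. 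Every seed of this pattern is a pair of $A_2$-seeds, since mutations never create arrows between blocks. Any $\sigma$ with $f(x_{i;s})=x_{\sigma(i);t}$ must preserve the two blocks, since positions $1,2$ always carry functions of $x_1,x_2$ alone. Each of the five $A_2$-clusters occurs with only one orientation of its arrow. Finally, $f$ reverses these orientations on the first factor while preserving them on the second. Hence $\sigma(B_s)=\pm B_t$ never holds.

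The right repair is the hypothesis you mention only in passing. Assume $B$ is indecomposable, which is equivalent to every $B_t$ being indecomposable since mutation preserves block decompositions. More precisely, it suffices that at most one component of $\Gamma_B$ carries arrows, as zero columns are harmless. The statement tacitly needs this hypothesis, and with it your argument is complete.
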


% \begin{remark}
% There is one more characterization of cluster automorphisms: a $\mathbb{Z}$-algebra automorphism $f:\mathcal{A}\rightarrow\mathcal{A}$ is a cluster automorphism if and only if $f$ maps each cluster to a cluster. Here, we do not discuss this in detail. %参考文献
% \end{remark}

As a direct consequence of Proposition \ref{prop:equivalent-def-cluster-auto},  each cluster automorphism $f$ can be represented by a quadruple 
\[f:=(\Sigma_1,\Sigma_2,\sigma,\epsilon),\] where $\Sigma_1=(\mathbf{x}_1,B_1)$ and $\Sigma_2=(\mathbf{x}_2,B_2)$ are labeled seeds, $\sigma\in S_n$ and $\epsilon\in \{\pm\}$, such that $\sigma(B_1)=\epsilon B_2$. In the following, we denote by $[B]$ the equivalence class of $B$ consisting of exchange matrices $\pm\sigma(B)$, $\forall \sigma\in S_n$.

% All in all, a quadruple $(\Sigma_1,\Sigma_2,\sigma,\varepsilon)$ consisting of two seeds: $\Sigma_1=(\mathbf{x}_1,B_1)$ and $\Sigma_2=(\mathbf{x}_2,B_2)$ of $\mathcal{A}$, $\varepsilon \in \{\pm \}$ and a permutation $\sigma$ such that $\sigma(B_1)=\varepsilon B_2$ determines a unique cluster automorphism $f$ which satisfies $\sigma(f(\mathbf{x}_1))=\mathbf{x}_2$. Hence we represent a cluster automorphism of $\mathcal{A}$ by a quadruple in the form above. As we shall see, a cluster automorphism can be written as different quadruples. In fact, we have the following results.

For $s,t\in \mathbb{T}_n$, we denote by $p(s,t)$ the unique path from $s$ to $t$ in $\mathbb{T}_n$:
\[
\xymatrix@R=0.1cm@C=1.0cm@M=0cm
{
\bullet \ar@{-}[r]^{i_1}&\bullet \ar@{-}[r]^{i_2} &\bullet \ar@{.}[r]&\bullet\ar@{-}[r]^{i_k}&\bullet.\\
s &&&  &t
}
\]
In this case, we denote
\begin{itemize}
    \item $\ell(s,t)=k$;
    \item $\mu_{p(s,t)}:=\mu_{i_k}\cdots\mu_{i_2}\mu_{i_1}$;
    \item $t:=\mu_{p(s,t)}(s)$;
    \item $\mu_{\sigma(p(s,t))}:=\mu_{\sigma(i_k)}\cdots\mu_{\sigma(i_2)}\mu_{\sigma(i_1)}$ for any $\sigma\in S_n$.
\end{itemize}
% Before that, we introduce some notations. From now on, fix $\mathbf{\Sigma}=\{\Sigma_t=(\mathbf{x}_t,B_t)\}_{t\in \mathbb{T}_n}$ to be a cluster pattern of rank $n$ with the root vertex $t_0$. Recall that for arbitrary $s,t \in \mathbb{T}_n$, there exists a path $p(s,t)=i_1 \dots i_k$ from $s$ to $t$, i.e.,
% \[
% \xymatrix@R=0.1cm@C=1.0cm@M=0cm
% {
% \bullet \ar@{-}[r]^{i_1}&\bullet \ar@{-}[r]^{i_2} &\bullet \ar@{.}[r]&\bullet\ar@{-}[r]^{i_k}&\bullet.\\
% s &&&  &t
% }
% \]
% We denote $t:=\mu_{i_k}\dots\mu_{i_2}\mu_{i_1}(s)=\mu_{p(s,t)}(s)$ and $\mu_{\sigma(p(s,t))}=\mu_{\sigma(i_k)} \dots \mu_{\sigma(i_1)}$. For a fixed vertex $t' \in \mathbb{T}_n$, the {\em weight of $p(s,t)$ with respect to $t'$} is defined by
% \[
%     \mathbf{w}_{t'}(s,t):=\# \{\; t''\in \mathbb{T}_n\;|\;\text{ $t''$ lies in the path $p(s,t)$ and $[B_{t''}]=[B_{t'}]$}\; \}.
% \]
The following is an observation of \cite[Proposition 3.4]{FL25}.
\begin{proposition}\label{prop:expression-composition-cluster-auto}
Let $f=(\Sigma_{t_1},\Sigma_{t_2},\sigma,\varepsilon)$ be a cluster automorphism of $\mathcal{A}$.
\begin{itemize}
    \item[(1)] For any $t_3 \in \mathbb{T}_n$, the following equality holds
    \[
    (\Sigma_{t_1},\Sigma_{t_2},\sigma,\varepsilon)=
    (\mu_{p(t_1,t_3)}(\Sigma_{t_1}),\mu_{\sigma(p(t_1,t_3))}(\Sigma_{t_2}),\sigma,\varepsilon).
    \]
    \item[(2)] Let $g=(\Sigma_{t_2},\Sigma_{t_4},\tau,\delta)$ be a cluster automorphism of $\mathcal{A}$, then  the composition of $f$ with $g$ is given by:
    \[
    (\Sigma_{t_2},\Sigma_{t_4},\tau,\delta)\circ (\Sigma_{t_1},\Sigma_{t_2},\sigma,\varepsilon)=(\Sigma_{t_1},\Sigma_{t_4},\tau\sigma,\delta\varepsilon).
    \]
\end{itemize}
\end{proposition}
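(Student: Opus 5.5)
The plan is to work directly with the description of a cluster automorphism given by Proposition \ref{prop:equivalent-def-cluster-auto}. A quadruple $(\Sigma_{t_1},\Sigma_{t_2},\sigma,\varepsilon)$ with $\sigma(B_{t_1})=\varepsilon B_{t_2}$ denotes the unique algebra automorphism determined by $x_{i;t_1}\mapsto x_{\sigma(i);t_2}$. Uniqueness holds because each cluster is a transcendence basis of $\mathcal{F}$ and $\mathcal{A}\subset\mathcal{F}$. So two quadruples define the same automorphism as soon as the corresponding maps agree on one cluster. Both parts reduce to checking images of a single cluster.

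For (1), I would induct on $\ell(t_1,t_3)$, so it suffices to treat a single edge $\xymatrix{t_1\ar@{-}[r]^k&t_1'}$ and set $t_2'=\mu_{\sigma(k)}(t_2)$. Two things must be checked.
\begin{itemize}
\item[(a)] The new quadruple is admissible, i.e.\ $\sigma(B_{t_1'})=\varepsilon B_{t_2'}$. By Lemma \ref{lemma:Sn-action-formula} we have $\sigma(\mu_k(B_{t_1}))=\mu_{\sigma(k)}(\sigma B_{t_1})=\mu_{\sigma(k)}(\varepsilon B_{t_2})$. The mutation formula gives $\mu_j(-B)=-\mu_j(B)$ directly, since the correction term $b_{ik}[b_{kj}]_++[-b_{ik}]_+b_{kj}$ changes sign under $B\mapsto -B$. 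Hence $\mu_{\sigma(k)}(\varepsilon B_{t_2})=\varepsilon B_{t_2'}$.
\item[(b)] The automorphism $f$ satisfies $f(x_{i;t_1'})=x_{\sigma(i);t_2'}$ for all $i$. For $i\neq k$ this is immediate, since $x_{i;t_1'}=x_{i;t_1}$ and $x_{\sigma(i);t_2'}=x_{\sigma(i);t_2}$.
\end{itemize}

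The only real computation is $i=k$ in (b). Apply $f$ to the exchange relation:
\[
f(x_{k;t_1'})=x_{\sigma(k);t_2}^{-1}\Bigl(\prod_j x_{\sigma(j);t_2}^{[b_{jk;t_1}]_+}+\prod_j x_{\sigma(j);t_2}^{[-b_{jk;t_1}]_+}\Bigr).
\]
Now substitute $b_{jk;t_1}=\varepsilon\, b_{\sigma(j)\sigma(k);t_2}$ and reindex by $\sigma(j)$. If $\varepsilon=+$ this is literally the exchange relation for $\mu_{\sigma(k)}$ at $t_2$. If $\varepsilon=-$ the two monomials are interchanged, and the sum is unchanged. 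In both cases we get $x_{\sigma(k);t_2'}$. By uniqueness the two quadruples coincide.

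For (2), I would check on the cluster $\mathbf{x}_{t_1}$. We have $g\circ f(x_{i;t_1})=g(x_{\sigma(i);t_2})=x_{\tau\sigma(i);t_4}$. Admissibility follows from $\tau\sigma(B_{t_1})=\tau(\varepsilon B_{t_2})=\varepsilon\delta B_{t_4}$, using that the $S_n$-action is linear in $B$. Uniqueness then gives the stated equality. I expect no serious obstacle: the one point needing care is the sign case $\varepsilon=-$ in (1), which is handled by the symmetry of the exchange binomial.
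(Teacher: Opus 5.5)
Your proof is correct and complete: the one-edge reduction in (1) via Lemma \ref{lemma:Sn-action-formula} and $\mu_k(-B)=-\mu_k(B)$, the exchange-relation check (where $\varepsilon=-$ only swaps the two monomials), and the check of (2) on $\mathbf{x}_{t_1}$ using $(\tau\sigma)B=\tau(\sigma B)$ all work with the paper's conventions $f(x_{i;t_1})=x_{\sigma(i);t_2}$ and $b_{ij;t_1}=\varepsilon\, b_{\sigma(i)\sigma(j);t_2}$. The paper gives no proof of its own here and instead cites \cite[Proposition 3.4]{FL25}; your argument is the natural direct verification, and its $i=k$ step is exactly the (CA2) content of Proposition \ref{prop:equivalent-def-cluster-auto}. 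The only point you leave implicit is that $x_{k;t_1}^{-1}\notin\mathcal{A}$ in general, so you should apply $f$ to $x_{k;t_1}x_{k;t_1'}$ and then divide in $\mathcal{F}$.
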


Clearly, the identity on $\mathcal{A}$ is a cluster automorphism. By Proposition \ref{prop:expression-composition-cluster-auto}, $(\Sigma_{t_2},\Sigma_{t_1},\sigma^{-1},\varepsilon)$ is the inverse of $(\Sigma_{t_1},\Sigma_{t_2},\sigma,\epsilon)$. If follows that the set $\Aut(\mathcal{A})$ of all cluster automorphisms of $\mathcal{A}$ is a group under composition, which is called the {\em cluster automorphism group} of $\mathcal{A}$, see \cite{ASS12}.
Another  consequence of Proposition \ref{prop:expression-composition-cluster-auto} (1) is that, every cluster automorphism $f$ can be expressed as
\[
    (\Sigma_{t_0}, \Sigma_t,\sigma, \varepsilon)
\]
for some labeled seed $\Sigma_t=(\mathbf{x}_t,B_t)$ such that $\sigma (B_{t_0})=\varepsilon B_t$. 

\subsection{Pseudo $\mathbb{N}$-grading}Keep the notation as in Section \ref{ss:cluster-autom}.
Let $s,t\in \mathbb{T}_n$.
 For a fixed vertex $t' \in \mathbb{T}_n$, the {\em weight of $p(s,t)$ with respect to $t'$} is defined by
\[
    \mathbf{w}_{t'}(s,t):=\# \{\; t''\in \mathbb{T}_n\;|\;\text{ $t''$ lies in the path $p(s,t)$ and $[B_{t''}]=[B_{t'}]$}\; \}.
\]
For any non-negative integers $m$, we define
\[
\mathcal{P}_m(t_0)=\{\;p(t_0,t)\;|\;\mathbf{w}_{t_0}(t_0,t)=m+1\;\text{and}\;[B_t]=[B_{t_0}]\}.
\]
Note that $\mathcal{P}_0(t_0)=\{p(t_0,t_0)\}$. Moreover, the collection of all cluster automorphisms determined by $p(t_0,t) \in \mathcal{P}_m(t_0)$ is denoted by
\[
    G_m(t_0):=\left\{f=(\Sigma_{t_0},\Sigma_s,\sigma, \varepsilon)\in \Aut(\mathcal{A})~\middle|~\begin{array}{l} 
    s\in \mathbb{T}_n\;\text{s.t.}\;p(t_0,s)\in \mathcal{P}_m(t_0)
    \end{array}
    \right\}\subset \Aut(\mathcal{A}).
\]
Clearly,  $G_0(t_0)$ is a finite subgroup of $\Aut(\mathcal{A})$ and $\Aut(\mathcal{A})=\bigcup_{m\in \mathbb{N}}G_m(t_0)$, which is referred to as a {\em pseudo $\mathbb{N}$-grading} of $\Aut(\mathcal{A})$, cf. \cite{FL25}.

\begin{lemma}\cite[Lemma 3.6]{FL25}\label{lem:factorization-cluster-auto}
Let $s,t \in \mathbb{T}_n$ be vertices such that $[B_{t_0}]=[B_s]$ and $[B_{t_0}]=[B_t]$. Let $f=(\Sigma_{t_0},\Sigma_t,\sigma,\varepsilon)$, $g=(\Sigma_{t_0},\Sigma_s,\tau,\delta) \in \Aut(\mathcal{A})$. Then there exists a cluster automorphism denoted by
\[
h=(\Sigma_{t_0},\mu_{\tau^{-1}(p(s,t))}(\Sigma_{t_0}), \tau^{-1}\sigma, \delta\varepsilon)
\]
such that $f=g\circ h$. Furthermore, $\mathbf{w}_{t'}(t_0,\mu_{\tau^{-1}(p(s,t))}(t_0))=\mathbf{w}_{t'}(s,t)$ for any $t'\in \mathbb{T}_n$.
\end{lemma}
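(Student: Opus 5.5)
The plan is to take $h:=g^{-1}\circ f$, which is automatically a cluster automorphism with $f=g\circ h$. Everything then reduces to showing that $h$ has the stated quadruple and that the weights agree, and both follow from Proposition \ref{prop:expression-composition-cluster-auto}.

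First, by the remark after Proposition \ref{prop:expression-composition-cluster-auto}, $g^{-1}=(\Sigma_s,\Sigma_{t_0},\tau^{-1},\delta)$. Apply Proposition \ref{prop:expression-composition-cluster-auto}(1) to this quadruple, taking as target vertex $t$ and using the path $p(s,t)$. This gives
\[
g^{-1}=(\mu_{p(s,t)}(\Sigma_s),\mu_{\tau^{-1}(p(s,t))}(\Sigma_{t_0}),\tau^{-1},\delta)=(\Sigma_t,\Sigma_{t'},\tau^{-1},\delta),
\]
where $t':=\mu_{\tau^{-1}(p(s,t))}(t_0)$. Now $f=(\Sigma_{t_0},\Sigma_t,\sigma,\varepsilon)$ ends at the seed where this representation of $g^{-1}$ begins. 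So Proposition \ref{prop:expression-composition-cluster-auto}(2) gives
\[
g^{-1}\circ f=(\Sigma_{t_0},\Sigma_{t'},\tau^{-1}\sigma,\delta\varepsilon).
\]
This is exactly the claimed $h$. The required matrix compatibility $\tau^{-1}\sigma(B_{t_0})=\delta\varepsilon B_{t'}$ comes for free, because $h$ is a composite of cluster automorphisms and Proposition \ref{prop:expression-composition-cluster-auto} only produces valid representations. Moreover, $f=g\circ h$ by construction.

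For the weight identity, write $p(s,t)$ as $s=u_0\overset{i_1}{-}u_1\overset{i_2}{-}\cdots\overset{i_k}{-}u_k=t$. Consecutive labels satisfy $i_j\neq i_{j+1}$, since $p(s,t)$ is a geodesic in the tree. The permuted sequence $\tau^{-1}(i_1),\dots,\tau^{-1}(i_k)$ therefore also has distinct consecutive labels. Starting at $t_0$, it traces the geodesic $p(t_0,t')$ through vertices $t_0=u_0',u_1',\dots,u_k'=t'$, and $u_j\mapsto u_j'$ is a bijection between the vertices of the two paths.

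Next, apply Proposition \ref{prop:expression-composition-cluster-auto}(1) to $g^{-1}=(\Sigma_s,\Sigma_{t_0},\tau^{-1},\delta)$ with target $u_j$, i.e.\ along the initial segment of $p(s,t)$ ending at $u_j$. This yields $g^{-1}=(\Sigma_{u_j},\Sigma_{u_j'},\tau^{-1},\delta)$, and hence $\tau^{-1}(B_{u_j})=\delta B_{u_j'}$, so $[B_{u_j}]=[B_{u_j'}]$. Consequently, for any fixed $t''\in\mathbb{T}_n$, $u_j$ satisfies $[B_{u_j}]=[B_{t''}]$ if and only if $u_j'$ does. Counting such vertices along the two paths gives $\mathbf{w}_{t''}(t_0,t')=\mathbf{w}_{t''}(s,t)$. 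This holds for every $t''$, which is the lemma's "for any $t'\in\mathbb{T}_n$" (renamed here because $t'$ now denotes the endpoint).

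The only step needing care is this last identification: that the relabelled mutation sequence really traces the geodesic $p(t_0,t')$, with no backtracking. The distinct-consecutive-labels observation settles it. The rest is bookkeeping with Proposition \ref{prop:expression-composition-cluster-auto}.
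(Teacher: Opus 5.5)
Your proof is correct and is the natural argument (the paper itself only quotes this lemma from \cite[Lemma 3.6]{FL25}): setting $h=g^{-1}\circ f$, moving $g^{-1}=(\Sigma_s,\Sigma_{t_0},\tau^{-1},\delta)$ along $p(s,t)$ by Proposition \ref{prop:expression-composition-cluster-auto}(1), and composing by part (2) gives exactly the stated quadruple. The weight identity then follows, as you say, from the vertex-by-vertex correspondence $u_j\leftrightarrow u_j'$, obtained from the same proposition applied to initial segments of $p(s,t)$, together with your observation that the relabelled sequence has no backtracking and therefore traces the geodesic $p(t_0,t')$.
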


Since each path $p(t_0,t)$ satisfying $[B_t]=[B_0]$ can determine at least one cluster automorphism. Then we arbitrarily fix a cluster automorphism denoted by $f_{p(t_0,t)}$ for the path $p(t_0,t)$. Hence, others corresponding to $p(t_0,t)$  are the composition of $f_{p(t_0,t)}$ with an automorphism belonging to $G_0(t_0)$ according to Lemma \ref{lem:factorization-cluster-auto}. We denote
\[
H_m(t_0):=\{\;f_{p(t_0,t)}\;|\;p(t_0,t)\in \mathcal{P}_m(t_0)\}\subset G_m(t_0).
\]
The following is one of the main results of \cite{FL25}, which leads to an elementary approach to calculate cluster automorphism groups for  cluster algebras of lower rank.
\begin{theorem}\cite[Theorem 3.10]{FL25}\label{thm:generator-set}
    The cluster automorphism group $\operatorname{Aut}(\mathcal{A})$ is generated by $G_0(t_0)\cup H_1(t_0)$.
\end{theorem}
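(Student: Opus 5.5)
We argue by induction on $m$, showing that every $f\in G_m(t_0)$ lies in the subgroup $\langle G_0(t_0)\cup H_1(t_0)\rangle$. Since $\Aut(\mathcal{A})=\bigcup_{m\in\mathbb{N}}G_m(t_0)$, this proves the theorem. The cases $m=0$ and $m=1$ hold by construction. For $m=0$ we have $f\in G_0(t_0)$. For $m=1$, write $f=(\Sigma_{t_0},\Sigma_t,\sigma,\varepsilon)$ with $p(t_0,t)\in\mathcal{P}_1(t_0)$. We compare $f$ with the fixed representative $g=f_{p(t_0,t)}\in H_1(t_0)$, which is based on the same path. Applying Lemma \ref{lem:factorization-cluster-auto} with $s=t$ gives $f=g\circ h$, where $h=(\Sigma_{t_0},\Sigma_{t_0},\tau^{-1}\sigma,\delta\varepsilon)\in G_0(t_0)$, because $p(t,t)$ is trivial.

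For the inductive step, take $m\ge 2$ and $f=(\Sigma_{t_0},\Sigma_t,\sigma,\varepsilon)$ with $p(t_0,t)\in\mathcal{P}_m(t_0)$. Count the vertices $t''$ on $p(t_0,t)$ with $[B_{t''}]=[B_{t_0}]$. There are $m+1$ of them, including both endpoints. Let $s$ be the first such vertex after $t_0$. Then $p(t_0,s)\in\mathcal{P}_1(t_0)$, since its only vertices in the class $[B_{t_0}]$ are its two endpoints.

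Choose $g\in G_1(t_0)$ associated to $p(t_0,s)$; for instance $g=f_{p(t_0,s)}$ composed with an element of $G_0(t_0)$, and in fact any such $g$ suffices. Lemma \ref{lem:factorization-cluster-auto} then gives $f=g\circ h$ with
\[
h=(\Sigma_{t_0},\mu_{\tau^{-1}(p(s,t))}(\Sigma_{t_0}),\tau^{-1}\sigma,\delta\varepsilon),
\]
and the weights satisfy $\mathbf{w}_{t_0}(t_0,\mu_{\tau^{-1}(p(s,t))}(t_0))=\mathbf{w}_{t_0}(s,t)$. The subpath $p(s,t)$ of $p(t_0,t)$ drops exactly the vertex $t_0$ among those in class $[B_{t_0}]$, so $\mathbf{w}_{t_0}(s,t)=m$.

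We must also confirm that the endpoint $t'=\mu_{\tau^{-1}(p(s,t))}(t_0)$ satisfies $[B_{t'}]=[B_{t_0}]$, so that $p(t_0,t')\in\mathcal{P}_{m-1}(t_0)$ and hence $h\in G_{m-1}(t_0)$. This follows because $h$ is a cluster automorphism sending $B_{t_0}$ to $\pm$ a permutation of $B_{t'}$. One can also see it directly: $\tau$ intertwines mutation of $B_{t_0}$ with mutation of $B_s$ up to sign, by Lemma \ref{lemma:Sn-action-formula} and the fact that $\mu_k(-B)=-\mu_k(B)$. By induction $h\in\langle G_0(t_0)\cup H_1(t_0)\rangle$, and $g$ lies there by the case $m=1$, so $f$ does too.

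The main obstacle is justifying the weight bookkeeping: the translated path $\tau^{-1}(p(s,t))$ starting at $t_0$ must pass through the class $[B_{t_0}]$ at exactly the positions where $p(s,t)$ does. This is exactly the last assertion of Lemma \ref{lem:factorization-cluster-auto}. Its content is that the intermediate matrices along the two paths agree up to $\pm$ and permutation, because $\tau(B_{t_0})=\delta B_s$ and mutation commutes with both relabeling and global sign. Since that lemma is taken as given, the remaining work is the counting argument above, which requires $m\ge 1$ so that $s$ exists and $s\neq t_0$.
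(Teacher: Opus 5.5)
Your argument is correct and is essentially the proof behind \cite[Theorem 3.10]{FL25}, which this paper cites rather than reproduces. You cut a path in $\mathcal{P}_m(t_0)$ at its first return $s$ to the class $[B_{t_0}]$ and apply Lemma \ref{lem:factorization-cluster-auto} (with its weight identity) to write $f=g\circ h$ with $g\in G_1(t_0)$ and $h\in G_{m-1}(t_0)$, and the case $s=t$ of the same lemma gives $G_1(t_0)\subseteq H_1(t_0)\,G_0(t_0)$; the bookkeeping $\mathbf{w}_{t_0}(s,t)=m$ and the check $[B_{t'}]=[B_{t_0}]$ are handled correctly.
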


\section{Main results}\label{s:main-results}
\subsection{Cluster algebras of finite mutation type}

In this section, we demonstrate that the cluster automorphism group of a cluster algebra of finite mutation type is finitely generated. In fact, we establish a sufficient condition to guarantee the finite generation of this group in a broader context.

 Let $\mathbf{\Sigma}$ be a cluster pattern of rank $n$ with root vertex $t_0\in \mathbb{T}_n$.
We set
\[
    \mathcal{B}(t_0)=\{[B_s]\mid s\;\text{lies in path} \;p(t_0,t)\in \mathcal{P}_1(t_0)\}.
\]
Then we have the following result.
\begin{lemma}\label{lem:fin-gen}
    Let $\mathcal{A}$ be the cluster algebra corresponding to $\mathbf{\Sigma}$. If  $\mathcal{B}(t_0)$ is a finite set, then $\Aut(\mathcal{A})$ is finitely generated.
\end{lemma}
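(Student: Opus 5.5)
By Theorem \ref{thm:generator-set}, $\Aut(\mathcal{A})$ is generated by $G_0(t_0)\cup H_1(t_0)$. Since $G_0(t_0)$ is finite, it suffices to replace $H_1(t_0)$ by a finite set $S$ and show that $G_0(t_0)\cup S$ generates every element of $H_1(t_0)$.

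\textbf{Step 1: reduce to finitely many path types.} For a path $p(t_0,t)\in\mathcal{P}_1(t_0)$, write $p(t_0,t)=(t_0=s_0,s_1,\dots,s_k=t)$. Its \emph{type} is the sequence of edge labels $(i_1,\dots,i_k)$ together with the matrices $B_{s_0},\dots,B_{s_k}$. Because $\mathbf{w}_{t_0}(t_0,t)=2$, the only vertices on the path whose matrix class is $[B_{t_0}]$ are the two endpoints. Moreover $\mathcal{B}(t_0)$ is finite, and each class $[B]$ contains only finitely many matrices $\pm\sigma(B)$. So every $B_{s_j}$ lies in a finite set $M$ of matrices.

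The labelled seeds along the path are then governed by a finite directed graph $\Gamma$. Its vertices are the matrices in $M$, and there is an edge $B\to\mu_i(B)$ whenever $\mu_i(B)\in M$. The matrix sequence of any path in $\mathcal{P}_1(t_0)$ is a walk in $\Gamma$ from $B_{t_0}$ to a matrix in $[B_{t_0}]$ whose interior avoids $[B_{t_0}]$. Consecutive labels must be distinct, since otherwise the path in $\mathbb{T}_n$ would backtrack.

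\textbf{Step 2: shorten paths whose matrices repeat.} The aim is to show that when a walk revisits a matrix, the corresponding automorphism factors through automorphisms of shorter paths. Suppose $B_{s_a}=B_{s_b}$ exactly, with $a<b$ and both interior. Then the matrix sequence obtained by deleting the loop between $s_a$ and $s_b$ is again realized by a path from $t_0$, namely $p'=p(t_0,s_a)$ followed by the labels $i_{b+1},\dots,i_k$. This is because the matrix sequence depends only on the starting matrix and the labels.

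The endpoints of $p$ and $p'$ both carry a matrix in $[B_{t_0}]$, so $p'$ gives an automorphism $f_{p'}$. I must relate $f_p$ to $f_{p'}$ and to a further automorphism for the loop. The loop itself starts and ends at the matrix $B_{s_a}\notin[B_{t_0}]$, so it does not directly give a cluster automorphism; this is the difficulty.

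\textbf{Step 3 (main obstacle): handle the loops.} For each of the finitely many matrices $C\in M$, I would fix a path $q_C$ from $t_0$ whose interior lies in $M$ and which ends at a vertex carrying a matrix equivalent to $C$, say $\rho_C(C)$ for a permutation $\rho_C$. Such a path exists by taking an initial segment of some path in $\mathcal{P}_1(t_0)$ through $C$.

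Any $p\in\mathcal{P}_1(t_0)$ can be cut at each interior vertex $s_j$. The automorphism $f_p$ should then telescope as a product of automorphisms attached to the pieces $q_{B_{s_{j}}}\cdot(\text{edge }i_{j+1})\cdot q_{B_{s_{j+1}}}^{-1}$, suitably permuted. Each piece corresponds to a path from $t_0$ to a vertex carrying a matrix in $[B_{t_0}]$, transported using Proposition \ref{prop:expression-composition-cluster-auto}(1) and Lemma \ref{lem:factorization-cluster-auto}.

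The combinatorial heart is to check that each piece is a path whose automorphism lies in a fixed finite set $S$. There is one piece for each edge of $\Gamma$, up to composing with $G_0(t_0)$. The pieces may revisit $[B_{t_0}]$, so they are not in $\mathcal{P}_1(t_0)$. Instead I would verify directly that each piece determines a cluster automorphism via Proposition \ref{prop:equivalent-def-cluster-auto}, with the permutations tracked by Lemma \ref{lemma:Sn-action-formula}. Telescoping then expresses $f_p$ as an element of $\langle G_0(t_0)\cup S\rangle$ with $|S|\le|\text{edges of }\Gamma|$.

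If the telescoping argument is too delicate, my fallback is induction on the length of $p$. Whenever $p$ is longer than $|M|+1$, a repeated matrix occurs. Applying Lemma \ref{lem:factorization-cluster-auto} to the vertex $s$ at the end of the segment $q_C$ splits $f_p$ into two factors, each of strictly smaller length. The sets $H_1(t_0)$ restricted to paths of bounded length are finite, which finishes the argument.
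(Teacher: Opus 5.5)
Your Step 3 is a viable route, different from the paper's. As submitted, though, it has a gap: the telescoping identity is asserted, not proved. Your remark in Step 2 that the loop ``does not directly give a cluster automorphism'' is also false, and its negation is what makes the identity easy. By Proposition \ref{prop:equivalent-def-cluster-auto}, any two seeds with $\sigma(B_u)=\epsilon B_v$ define $(\Sigma_u,\Sigma_v,\sigma,\epsilon)\in\Aut(\mathcal{A})$, whether or not $[B_u]=[B_{t_0}]$; for example, $(\Sigma_{s_a},\Sigma_{s_b},\mathrm{id},+)$ is one. The identity then goes through as follows.
\begin{itemize}
\item For each class in $\mathcal{B}(t_0)$, fix one vertex realizing it, using $t_0$ for $[B_{t_0}]$.
\item For $C\in M$, let $u_C$ be the vertex fixed for $[C]$, and fix $\rho_C\in S_n$ and $\epsilon_C$ with $\rho_C(C)=\epsilon_C B_{u_C}$, taking $\rho_C=\mathrm{id}$, $\epsilon_C=+$ when $C=B_{t_0}$.
\item For $p=(s_0,\dots,s_k)\in\mathcal{P}_1(t_0)$ with $C_j=B_{s_j}$ and $f_p=(\Sigma_{t_0},\Sigma_t,\sigma,\epsilon)$, put $\phi_j=(\Sigma_{s_j},\Sigma_{u_{C_j}},\rho_{C_j},\epsilon_{C_j})$.
\end{itemize}
Move the first seed of $\phi_{j+1}$ back along the edge $i_{j+1}$ by Proposition \ref{prop:expression-composition-cluster-auto}(1), and compose using part (2). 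This gives
\[
\phi_{j+1}\circ\phi_j^{-1}=\bigl(\Sigma_{u_{C_j}},\ \mu_{\rho_{C_{j+1}}(i_{j+1})}(\Sigma_{u_{C_{j+1}}}),\ \rho_{C_{j+1}}\rho_{C_j}^{-1},\ \epsilon_{C_j}\epsilon_{C_{j+1}}\bigr),
\]
which depends only on the labelled edge $(C_j,i_{j+1},C_{j+1})$ of $\Gamma$. Since $\phi_0=\mathbf{id}$ and $\phi_k\circ f_p=(\Sigma_{t_0},\Sigma_{t_0},\rho_{C_k}\sigma,\epsilon_{C_k}\epsilon)\in G_0(t_0)$, the product telescopes. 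Hence $f_p\in\langle G_0(t_0)\cup S\rangle$ with $|S|\le|\text{edges of }\Gamma|$, and Step 2 is not needed at all.

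Your fallback fails as written, for two reasons.
\begin{itemize}
\item Lemma \ref{lem:factorization-cluster-auto} needs the splitting vertex $s$ to satisfy $[B_s]=[B_{t_0}]$, but the end of $q_C$ has class $[C]\neq[B_{t_0}]$.
\item A repeated matrix gives no length control. If the repetition sits at distance $a$ from $t_0$ and the way back to $[B_{t_0}]$ has length $\ge a$, the second factor need not be shorter than $p$.
\end{itemize}
The repaired version is exactly the paper's proof. Let $l$ be the maximum, over classes in $\mathcal{B}(t_0)$, of the least distance from $t_0$ to that class. Cut a path of length $\ge 2l+2$ at a vertex $t'$ at distance $\ge l+1$ from both ends. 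From $t'$, follow the permuted reverse of a shortest path from $t_0$ to $[B_{t'}]$; this reaches $s$ with $[B_s]=[B_{t_0}]$ and $\ell(t',s)\le l$. Then $\ell(t_0,s)$ and $\ell(s,t)$ are both $<\ell(t_0,t)$. Both pieces keep weight $2$, because minimality keeps $[B_{t_0}]$ out of the shortcut's interior. Induction on length inside $H_1(t_0)$ then yields the generating set $G_0(t_0)\cup H_1(t_0)^{<2l+2}$.

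As for what each approach buys: your corrected telescoping is non-inductive and bounds the number of generators by the edge count of a finite graph, as in the spanning-tree argument for $\pi_1$ of a graph. The paper's midpoint induction keeps every generator inside $H_1(t_0)$ with an explicit length bound, which is what Example \ref{exam:type-X7} exploits. If you choose each $u_C$ at minimal distance from $t_0$, your generators come from walks of length at most $2l+1$, so the two generating sets essentially agree.
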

\begin{proof}
   By the condition, we may assume that $\mathcal{B}(t_0)=\{[B_{t_0}],[B_1],\dots,[B_m]\}$ for some nonnegative integer $m$. For $1\leq i\leq m$, we define
    \[
    l_i=\min \{ \ell(t_0,t_i)|[B_{t_i}]=[B_i],t_i\in \mathbb{T}_n \}
    \]
and denote by 
        \[l=\operatorname{max}\{l_1,\dots,l_m\}.\]
        By Theorem \ref{thm:generator-set}, it suffices to show that $H_1(t_0)$ is generated by a finite set. Indeed, we can divide $H_1(t_0)$ into two subsets as follows
   \begin{align*}
       H_1(t_0)^{< 2l+2 }:=\{f_{p(t_0,t)}\mid p(t_0,t)\in \mathcal{P}_1(t_0), \ell(t_0,t)<2l+2 \},&    \\
       H_1(t_0)^{\ge 2l+2 }:=\{f_{p(t_0,t)}\mid p(t_0,t)\in \mathcal{P}_1(t_0), \ell(t_0,t)\ge 2l+2 \}.&
   \end{align*}
    The finiteness of the first subset is evident. We claim that the second subset is generated by $G_0(t_0)\cup H_1(t_0)^{<2l+2}$, and the result follows directly.    

    We prove the claim by induction on the length $\ell(t_0,v)$ of $p(t_0,v)\in \mathcal{P}_1(t_0)$ with $\ell(t_0,v)\geq 2l+2$. Let $f_{p(t_0,t)}\in H_1(t_0)^{\ge 2l+2}$ and suppose that the claim is verified for length less than $\ell(t_0,t)$.
    We choose a vertex $t'\in p(t_0,t)$ such that $\ell(t_0,t')\ge l +1$ and  $\ell(t',t)\ge l +1$. By assumption, there is a $1\leq j\leq m$ such that $[B_{t'}]=[B_{j}]$. According to the defintion of $l_i$, there 
     exists a vertex $s\in \mathbb{T}_n$ such that $\ell(t',s)=l_j$ and $[B_{s}]=[B_{t_0}]$. Clearly, we have
    \begin{align*}
     \ell(t_0,s)\leq & \ell(t_0,t')+\ell(t',s)< \ell(t_0,t')+\ell(t',t)=\ell(t_0,t), \\
     \ell(s,t)\leq & \ell(s,t')+\ell(t',t)<\ell(t_0,t')+\ell(t',t)=\ell(t_0,t).
     \end{align*}
     Moreover, $p(t_0,s)\in \mathcal{P}_1(t_0)$ and $\mathbf{w}_{t_0}(s,t)=2$.
     Let $g=f_{p(t_0,s)}$. By Lemma \ref{lem:factorization-cluster-auto}, there exists an $h=(\Sigma_{t_0},\Sigma_{t''}, \tau,\sigma)\in G_1(t_0)$ such that $f=g\circ h$, where $p(t_0,t'')\in \mathcal{P}_1(t_0)$ with $\ell(t_0,t'')=\ell(s,t)<\ell(t_0,t)$. Furthermore, $h$ can be written as a product of $f_{p(t_0,t'')}$ with an element in $G_0(t_0)$. By induction, $f_{p(t_0,t)}$ can be generated by $G_0(t_0)\cup H_1(t_0)^{<2l+2}$. This completes the proof.

\end{proof}
 Recall that $\mathcal{A}$ is a cluster algebra of {\em finite mutation type} if the set $\{B_t\mid t\in \mathbb{T}_n\}$ is finite.
As a direct consequence of Lemma \ref{lem:fin-gen}, we recover the following well known result for cluster algebras of finite mutation type \cite{FG06}, cf. also \cite[Lemma 10.1]{Fr20}.
\begin{theorem}\label{thm:main-mutation-finite}
    Let $\mathcal{A}$ be a cluster algebra of finite mutation type and $\Aut(\mathcal{A})$ be the cluster automorphism group. Then $\Aut(\mathcal{A})$ is finitely generated.
\end{theorem}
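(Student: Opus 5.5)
The plan is to deduce the theorem directly from Lemma \ref{lem:fin-gen}. Once we check that $\mathcal{B}(t_0)$ is finite whenever $\mathcal{A}$ is of finite mutation type, the lemma gives finite generation of $\Aut(\mathcal{A})$ immediately.

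Fix a cluster pattern $\mathbf{\Sigma}$ with root vertex $t_0$ defining $\mathcal{A}$. By the definition of finite mutation type, the set $\{B_t\mid t\in \mathbb{T}_n\}$ is finite; say it has cardinality $N$.

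The set $\mathcal{B}(t_0)$ consists of classes $[B_s]$ with $s$ ranging over vertices on paths $p(t_0,t)\in\mathcal{P}_1(t_0)$. It is therefore a subset of $\{[B_t]\mid t\in\mathbb{T}_n\}$, which is the image of the finite set $\{B_t\mid t\in\mathbb{T}_n\}$ under the map $B\mapsto [B]$. Hence $|\mathcal{B}(t_0)|\le N$.

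The one point to be careful about is that $[B]$ denotes the class of all $\pm\sigma(B)$ with $\sigma\in S_n$. This class is itself finite, of size at most $2\cdot n!$, and the map $B\mapsto[B]$ is well defined. Finiteness of the image is therefore immediate, so there is no real obstacle. Applying Lemma \ref{lem:fin-gen} then shows that $\Aut(\mathcal{A})$ is finitely generated.

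Concretely, the resulting generating set is $G_0(t_0)\cup H_1(t_0)^{<2l+2}$, where $l$ is bounded in terms of the finitely many classes in $\mathcal{B}(t_0)$.
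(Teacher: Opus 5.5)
Your proposal is correct and takes the same approach as the paper, which states the theorem as a direct consequence of Lemma \ref{lem:fin-gen}: finite mutation type makes $\{[B_t]\mid t\in\mathbb{T}_n\}$ finite, so $\mathcal{B}(t_0)$ is finite. Your remark that each class $[B]$ has at most $2\cdot n!$ elements is harmless but unnecessary, since the image of a finite set under any map is finite.
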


\begin{remark}\label{rem:fin-mut-class}
\begin{itemize}
    \item If the set $\mathcal{B}(t_0)$ is finite, then $\mathcal{B}(t)$ is finite for any  $t\in \mathbb{T}_n$.
    \item It is clear that if $\mathbf{\Sigma}$  is a cluster pattern of rank $\leq 2$, then the set $\mathcal{B}(t)$ is finite. On the other hand, according to \cite{FL25}, for any cluster pattern $\mathbf{\Sigma}$ of rank $3$, the set $\mathcal{B}(t)$ is also finite. However, there are cluster patterns of higher ranks  such that $\mathcal{B}(t)$ is infinite; see Example \ref{exam:infinite-exchange-matrix}.
\end{itemize}

\begin{remark}
 Cluster algebras of finite mutation type were classified by Fomin, Shapiro, and Thurston \cite{FST12a, FST12b}. In particular, any skew-symmetric cluster algebra of finite mutation type with rank $n \ge 3$ is either of surface type or one of the eleven exceptional types.The cluster automorphism groups of cluster algebras of surface type were determined in \cite{DL23}, based on the mapping class groups of the associated surfaces. Furthermore, eight of the eleven exceptional types (specifically, types $E$, $\tilde{E}$, $E_7^{(1,1)}$ and $E_8^{(1,1)}$) have been explicitly characterized \cite{ASS12,Fr20}. Consequently, a natural question arises: can we explicitly determine the cluster automorphism groups for the remaining three cases? The proof of Lemma \ref{lem:fin-gen} provides, at the very least, a valid algorithm for identifying their sets of generators, cf. Example \ref{exam:type-X7}.
\end{remark}
    
\end{remark}
%  This leads us to pose the following question. 
% \begin{question}
% For any cluster pattern $\mathbf{\Sigma}$ of rank $n$,
%   does there exist a vertex $t_0\in \mathbb{T}_n$ such that $\mathcal{B}(t_0)$ is finite?
% \end{question}

\subsection{Acyclic cluster algebras}
Throughout this subsection, let $\mathbf{\Sigma}$ be an acyclic cluster pattern of rank $n$, and denote by $t_0\in \mathbb{T}_n$ such that the exchange matrix $B_{t_0}$ is acyclic. Denote by $\mathcal{A}$ the cluster algebra associated with $\mathbf{\Sigma}$. As we will see in Example \ref{exam:infinite-exchange-matrix} that the finiteness of $\mathcal{B}(t_0)$ is not always satisfied for $\mathbf{\Sigma}$. Nevertheless, the finiteness of a specific subset of $\mathcal{B}(t_0)$ is sufficient to establish that $\Aut(\mathcal{A})$ is finitely generated.

For  a seed $\Sigma_t$ and $k\in \{1,\ldots, n\}$, if $b_{jk;t}\geq 0$ for all $j\in \{1,\ldots, n\}$, then $\mu_k$ is called a {\em sink mutation} of $\Sigma_t$. Dually, if $b_{jk;t}\leq 0$ for all $j\in \{1,\ldots, n\}$, then $\mu_k$ is called a {\em source mutation} of $\Sigma_t$.
For a path
\[
\xymatrix{t_1\ar@{-}[r]^{i_1}&t_2\ar@{-}[r]^{i_2}&t_3\ar@{-}[r]&\cdots\ar@{-}[r]^{i_k}&t_{k+1}}
\]
of $\mathbb{T}_n$, we say that the path $p(t_1,t_{k+1})$ is a {\em sink-source sequence} if $\mu_{i_j}$ is a sink or source mutation of $\Sigma_{t_j}$ for $1\leq j\leq k$.

The following is a direct consequence of Lemma \ref{lemma:Sn-action-formula}.
\begin{lemma}\label{lem:refine-factorization}
    Let $s,t \in \mathbb{T}_n$ be vertices such that $[B_{t_0}]=[B_s]$ and $[B_{t_0}]=[B_t]$. Let $f=(\Sigma_{t_0},\Sigma_t,\sigma,\varepsilon)$, $g=(\Sigma_{t_0},\Sigma_s,\tau,\delta) \in \Aut(\mathcal{A})$.
    If both $p(t_0,t)$ and $p(t_0,s)$ are sink-source sequences, then $p(t_0,
    \mu_{\tau^{-1}(p(s,t))}(t_0)$ is also a sink-source sequence.
\end{lemma}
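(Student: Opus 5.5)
The proof reduces to two facts: the sink/source condition depends only on the exchange matrices along a path, and both the $S_n$-action and the sign $B\mapsto -B$ carry sink mutations to sink or source mutations.

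Write $p(s,t)$ as the sequence $s=s_0 \overset{i_1}{-} s_1 \overset{i_2}{-}\cdots\overset{i_k}{-} s_k=t$. The path $p(t_0,t)$ passes through $s$, since $p(t_0,t)$ is obtained by reducing the concatenation of $p(t_0,s)$ and $p(s,t)$. Also, $\mu_{\tau^{-1}(p(s,t))}(t_0)$ is the path from $t_0$ with labels $\tau^{-1}(i_1),\dots,\tau^{-1}(i_k)$.

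First I will check that each step of $p(s,t)$ is a sink or source mutation. If $p(t_0,t)$ literally contains $p(s,t)$ as its tail, this follows from the hypothesis on $p(t_0,t)$. In general the reduced concatenation cancels a common initial segment, so that $p(t_0,s)$ and $p(t_0,t)$ share a prefix up to a branching vertex $u$. Then $p(s,t)$ consists of the reversed tail of $p(t_0,s)$ from $s$ back to $u$, followed by the tail of $p(t_0,t)$ from $u$. The forward tail of $p(t_0,t)$ consists of sink/source steps by hypothesis. For the reversed portion I use that $\mu_k$ of a sink is a source and vice versa: if $k$ is a sink of $B_{s_j}$, then $k$ is a source of $\mu_k(B_{s_j})$. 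Hence reversing a sink-source sequence gives a sink-source sequence. So every step of $p(s,t)$ is a sink or source mutation.

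Next I transport these steps to $t_0$. Since $g=(\Sigma_{t_0},\Sigma_s,\tau,\delta)$, we have $\tau(B_{t_0})=\delta B_s$, that is, $B_{t_0}=\delta\,\tau^{-1}(B_s)$. By Lemma \ref{lemma:Sn-action-formula}, and since $\mu_k(-B)=-\mu_k(B)$, induction along the path gives
\[
B_{r_j}=\delta\,\tau^{-1}(B_{s_j}),
\]
where $r_j=\mu_{\tau^{-1}(i_j)}\cdots\mu_{\tau^{-1}(i_1)}(t_0)$. The sink condition at $i_{j+1}$ for $B_{s_j}$, namely $b_{l\,i_{j+1};s_j}\ge 0$ for all $l$, becomes $b_{l'\,\tau^{-1}(i_{j+1});r_j}\ge 0$ when $\delta=+$ and $\le 0$ when $\delta=-$. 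Thus $\tau^{-1}(i_{j+1})$ is a sink or source of $B_{r_j}$. Therefore $p(t_0,\mu_{\tau^{-1}(p(s,t))}(t_0))$ is a sink-source sequence. Here I also use that this labeled path is reduced, which holds because $p(s,t)$ is reduced and $\tau^{-1}$ is a bijection on labels.

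The main obstacle is the bookkeeping in the first step, where $p(s,t)$ retraces part of $p(t_0,s)$. The key observation is that the reversal of a sink-source step is again a sink-source step.
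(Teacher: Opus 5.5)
Your proof is correct and takes the same route as the paper, which records this lemma as a direct consequence of Lemma~\ref{lemma:Sn-action-formula}. Your transport $B_{r_j}=\delta\,\tau^{-1}(B_{s_j})$ (using $\mu_k(-B)=-\mu_k(B)$), together with the observation that reversing a sink step gives a source step so that $p(s,t)$ is itself sink-source, spells out what the paper leaves implicit. One slip: your opening claim that $p(t_0,t)$ always passes through $s$ is false in general, but your branching-vertex argument covers the general case, so nothing depends on it.
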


The following result has been proved by \cite[Corollary 4]{CK06} for skew-symmetric cases, but the proof is valid for skew-symmetrizable cases; see also \cite[Lemma 3.5]{ASS12}.

\begin{lemma}\label{lem:APR-mut}
If $t\in \mathbb{T}_n$ such that $B_t$ is also acyclic, then there is a vertex $s\in \mathbb{T}_n$ such that $[\Sigma_s]=[\Sigma_t]$ and the path $p(t_0,s)$ is a sink-source sequence.
\end{lemma}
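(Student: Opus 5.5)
We prove Lemma \ref{lem:APR-mut} following the strategy of \cite[Corollary 4]{CK06}, replacing categorical input by the combinatorics of acyclic exchange matrices. The key fact is that in an acyclic seed every sink or source mutation only reverses the arrows at the mutated vertex: if $k$ is a sink of $B_t$, then $b_{ik;t}b_{kj;t}\le 0$ for all $i,j$. Hence $\mu_k(B_t)$ is acyclic and $\Gamma_{\mu_k(B_t)}$ is $\Gamma_{B_t}$ with all arrows at $k$ reversed. Consequently, the set $\mathcal{R}$ of unlabeled seeds reachable from $[\Sigma_{t_0}]$ by sink-source sequences consists of acyclic seeds. The statement says exactly that every acyclic seed $[\Sigma_t]$ lies in $\mathcal{R}$.

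\emph{Step 1 (reduction to an adjacent step).} Since $B_t$ is acyclic and $\mathbf{\Sigma}$ is connected through mutations, we will show that any two acyclic seeds are joined by a path in $\mathbb{T}_n$ all of whose intermediate seeds are acyclic. Combined with Step 2, this lets us proceed by induction along such a path. Passing between acyclic seeds through acyclic seeds is the main obstacle. In \cite{CK06} it follows from the cluster category: acyclic seeds correspond to cluster-tilting objects $T$ with $\opname{End}(T)$ hereditary, that is, to slices or sections in the transjective component. The relevant fact is that any two such objects are related by APR tilts (shifts of $\tau$) up to isomorphism, and APR tilts are precisely sink/source mutations. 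To make the argument valid in the skew-symmetrizable case, I would use the species version of the cluster category, or the known result that cluster variables of acyclic seeds are obtained from initial ones by Coxeter transformations. I would first try to cite this directly, noting that the proof in \cite{CK06} uses only properties (hereditary algebra, AR translation, slices) that hold verbatim for species.

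\emph{Step 2 (the adjacent/combinatorial step).} Suppose $[\Sigma_u]\in\mathcal{R}$ and $\Sigma_t$ is an acyclic seed whose cluster is obtained by translating the slice corresponding to $\Sigma_u$. Choosing an admissible ordering of the vertices of $\Gamma_{B_u}$ (sinks first), the composition of sink mutations along this ordering realizes $\tau^{\pm1}$ on the slice. It lands on the seed $\Sigma_u$ up to the permutation dictated by the ordering, which explains why the conclusion holds only for the unlabeled class $[\Sigma_s]=[\Sigma_t]$. Partial sink sequences realize the intermediate slices. Thus every slice in the transjective component, and hence every acyclic seed, is reached by a sink-source sequence from $t_0$. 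Using Lemma \ref{lemma:Sn-action-formula}, we transport permutations so that the actual path in $\mathbb{T}_n$ from $t_0$ is a sink-source sequence ending at a vertex $s$ with $[\Sigma_s]=[\Sigma_t]$.

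Finally, we check that labels pose no issue. Sink/source conditions depend only on $B$, and $\sigma$ acts compatibly by Lemma \ref{lemma:Sn-action-formula}. The main point to verify carefully remains that all acyclic seeds correspond to slices in the skew-symmetrizable setting.
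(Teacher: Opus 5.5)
Your proposal takes essentially the same route as the paper, which gives no independent proof: it cites \cite[Corollary 4]{CK06} (the cluster-category argument via slices and APR tilts that you outline) and asserts that this argument remains valid in the skew-symmetrizable case, see also \cite[Lemma 3.5]{ASS12}; the point you flag as still to be verified is precisely what the paper takes for granted. Two minor corrections: a full round of sink mutations along an admissible ordering returns exactly the labeled matrix $B_u$ (every arrow is reversed twice) while translating the cluster by $\tau^{\pm1}$, so it does not land on $\Sigma_u$ up to a permutation---the permutation in $[\Sigma_s]=[\Sigma_t]$ only reflects that the labels at $t$ are inherited from the arbitrary path $p(t_0,t)$; and your Step 1 would suffice on its own, since mutating an acyclic matrix at a vertex $k$ that is neither a sink nor a source always creates an oriented cycle $i\to j\to k\to i$, so any path through acyclic seeds is automatically a sink-source sequence.
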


For any positive integer $m$, we define
\[
\mathcal{P}_m^{ss}(t_0)=\{p(t_0,t)\in \mathcal{P}_m(t_0)\mid \text{$p(t_0,t)$ is labeled by a sink-source sequence}\}
\]
and
\[
H_m^{ss}(t_0)=\{f_{p(t_0,t)}\mid p(t_0,t)\in \mathcal{P}_m^{ss}(t_0)\}\subseteq H_m(t_0).
\]
The following is a refinement of Theorem \ref{thm:generator-set} for acyclic cluster algebras.
\begin{lemma}\label{l:gen-set-acyclic}
    Let $\mathcal{A}$ be the associated cluster algebra of $\mathbf{\Sigma}$ and $\Aut(\mathcal{A})$ be the cluster automorphism group. Then $\Aut(\mathcal{A})$ is generated by $G_0(t_0)\cup H_1^{ss}(t_0)$.
\end{lemma}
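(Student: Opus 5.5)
The plan is to show that every cluster automorphism can be written as $(\Sigma_{t_0},\Sigma_t,\sigma,\varepsilon)$ with $p(t_0,t)$ a sink-source sequence. Once that is in place, we run the factorization argument behind Theorem \ref{thm:generator-set}, using Lemma \ref{lem:refine-factorization} to keep track of the sink-source property.

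\emph{Step 1 (normal form).} Let $f\in\Aut(\mathcal{A})$. By the remark after Proposition \ref{prop:expression-composition-cluster-auto}, $f=(\Sigma_{t_0},\Sigma_t,\sigma,\varepsilon)$ with $\sigma(B_{t_0})=\varepsilon B_t$. In particular $B_t$ is acyclic, since $\pm$ a permutation of an acyclic matrix is acyclic. Lemma \ref{lem:APR-mut} then gives $s$ with $[\Sigma_s]=[\Sigma_t]$ and $p(t_0,s)$ a sink-source sequence. Write $\Sigma_s=\rho(\Sigma_t)$ with $\rho\in S_n$. Then
\[
f=(\Sigma_{t_0},\Sigma_s,\rho\sigma,\varepsilon),
\]
since $\rho\sigma(B_{t_0})=\varepsilon B_s$ and $\rho\sigma(f(\mathbf{x}_{t_0}))=\rho(\mathbf{x}_t)=\mathbf{x}_s$. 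This is the required normal form.

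\emph{Step 2 (induction on weight).} Let $f=(\Sigma_{t_0},\Sigma_t,\sigma,\varepsilon)$ with $p(t_0,t)$ sink-source and $\mathbf{w}_{t_0}(t_0,t)=m+1$. We induct on $m$.

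If $m=0$, then $f\in G_0(t_0)$.

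If $m\ge1$, walk along $p(t_0,t)$ and let $s$ be the second vertex after $t_0$ with $[B_s]=[B_{t_0}]$. Then $p(t_0,s)\in\mathcal{P}_1^{ss}(t_0)$, because an initial segment of a sink-source sequence is still sink-source. Take $g=f_{p(t_0,s)}\in H_1^{ss}(t_0)$, where we may assume the representative was chosen as $(\Sigma_{t_0},\Sigma_s,\tau,\delta)$; any other choice differs by an element of $G_0(t_0)$. Lemma \ref{lem:factorization-cluster-auto} gives $f=g\circ h$ with
\[
h=(\Sigma_{t_0},\Sigma_{t'},\tau^{-1}\sigma,\delta\varepsilon),\qquad t'=\mu_{\tau^{-1}(p(s,t))}(t_0).
\]
For the weight we have $\mathbf{w}_{t_0}(t_0,t')=\mathbf{w}_{t_0}(s,t)=m$, so $h$ has weight index $m-1$. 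By Lemma \ref{lem:refine-factorization}, $p(t_0,t')$ is again a sink-source sequence. So the induction hypothesis applies to $h$, and hence $f$ lies in the subgroup generated by $G_0(t_0)\cup H_1^{ss}(t_0)$.

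\emph{Main obstacle.} The delicate point is making Lemma \ref{lem:refine-factorization} apply, since its hypothesis is about full sink-source paths $p(t_0,t)$ and $p(t_0,s)$. We also need that $p(s,t)$, as a tail of a sink-source sequence, stays sink-source after relabelling by $\tau^{-1}$ and transporting to $t_0$. This holds because $\tau^{-1}(\Sigma_s)$ agrees with $\Sigma_{t_0}$ up to the sign $\delta$. A sign change swaps sinks and sources, which preserves the sink-source property, and Lemma \ref{lemma:Sn-action-formula} transports each mutation step. The other point to check is the normalization of the chosen representatives $f_{p(t_0,s)}$: we need $g$ to have exactly the form required by Lemma \ref{lem:factorization-cluster-auto}, and this is absorbed by $G_0(t_0)$ as noted after that lemma.
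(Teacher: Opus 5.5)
Your proof is correct and follows the paper's argument. You put $f$ into the form $(\Sigma_{t_0},\Sigma_s,\cdot,\cdot)$ with $p(t_0,s)$ a sink-source sequence via Lemma \ref{lem:APR-mut}, then induct on the weight using Lemma \ref{lem:factorization-cluster-auto} together with Lemma \ref{lem:refine-factorization}. The one difference is that you apply the normal form to an arbitrary $f$ instead of first reducing to $H_1(t_0)$ via Theorem \ref{thm:generator-set}; this is harmless because your induction on $m$ covers all weights anyway. Also, the hypotheses of Lemma \ref{lem:refine-factorization} hold directly in your setting, so your ``main obstacle'' is already settled by that lemma.
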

\begin{proof}
    According to Theorem \ref{thm:generator-set}, it suffices to prove that any $f_{p(t_0,t)}\in H_1(t_0)$ can be expressed as a product of elements of $G_0(t_0)\cup H_1^{ss}(t_0)$.

    Let $f=f_{p(t_0,t)}\in H_1(t_0)$. It follows that $B_t$ is acyclic by Proposition \ref{prop:equivalent-def-cluster-auto}. Hence, there is a vertex $s\in \mathbb{T}_n$ such that $[\Sigma_s]=[\Sigma_t]$ and $p(t_0,s)$ is labeled by a sink-source sequence by Lemma \ref{lem:APR-mut}.  Again by Proposition \ref{prop:equivalent-def-cluster-auto}, $f$ can be expressed as $f=(\Sigma_{t_0},\Sigma_s, \sigma,\epsilon)$ for some $\sigma\in S_n$ and $\epsilon\in \{\pm\}$. We conclude that $f=f_{p(t_0,s)}\circ h$ for some $h\in G_0(t_0)$ by Lemma \ref{lem:factorization-cluster-auto}, where $f_{p(t_0,s)}\in H_m^{ss}(t_0)$ for some $m\geq 1$. By induction on $m$,  we conclude that $f_{p(t_0,s)}$ is generated by $G_0(t_0)\cup H_1^{ss}(t_0)$ by applying Lemma \ref{lem:factorization-cluster-auto} and \ref{lem:refine-factorization}.
\end{proof}
\begin{remark}
    Lemma \ref{l:gen-set-acyclic} can be used to reduce the lengthy computation of cluster automorphism groups for acyclic cluster algebras of rank $3$ in \cite{FL25}, cf. Example \ref{eg:rank-3-acyclic}.
\end{remark}
Set \[\mathcal{B}^{ss}(t_0)=\{[B_s]\mid \text{$s\in p(t_0,t)$ and $ p(t_0,t)\in \mathcal{P}_1^{ss}(t_0)$}\}.\]
\begin{lemma}
    The set $\mathcal{B}^{ss}(t_0)$ is  a finite set.
\end{lemma}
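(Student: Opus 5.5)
The plan is to show that every exchange matrix occurring along a sink-source path from $t_0$ is, up to the relation $\sim$ and sign, one of finitely many matrices determined by $B_{t_0}$. The key observation is that a sink or source mutation of an acyclic exchange matrix only reverses the arrows at the mutated vertex. In the skew-symmetrizable setting, if $\mu_k$ is a sink or source mutation of $\Sigma_t$, then $b_{ik;t}b_{kj;t}\le 0$ for all $i,j$. Hence the correction term $b_{ik}[b_{kj}]_++[-b_{ik}]_+b_{kj}$ in the mutation rule vanishes, and $\mu_k(B_t)$ is obtained from $B_t$ by negating the $k$-th row and column.

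The first step is an induction along the path $p(t_0,t)$, using this observation. It shows that for every vertex $s$ on a sink-source sequence starting at $t_0$, the matrix $B_s$ satisfies $|b_{ij;s}|=|b_{ij;t_0}|$ for all $i,j$. Moreover, $B_s$ is obtained from $B_{t_0}$ by negating a set of rows together with the corresponding columns. Equivalently, $\Gamma_{B_s}$ has the same underlying weighted graph as $\Gamma_{B_{t_0}}$, with some arrows reversed, and the skew-symmetrizer $D$ is unchanged. Along the way we must also check that each $B_s$ stays acyclic, because the next step of the path again has to be a sink or source mutation. This is clear: reversing all arrows at a sink or source cannot create an oriented cycle through that vertex, since the vertex becomes a source or sink.

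The second step is the counting. Every matrix obtained in the first step has the form $E B_{t_0} E$, where $E=\operatorname{diag}(\pm1,\dots,\pm1)$. There are at most $2^n$ such matrices, so the set $\{B_s\}$, and hence the set of their classes $[B_s]$, has at most $2^n$ elements. This bounds $\mathcal{B}^{ss}(t_0)$. The bound does not depend on the paths lying in $\mathcal{P}_1^{ss}(t_0)$; it holds for all sink-source sequences from $t_0$.

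The main (mild) obstacle is the first step. We must verify carefully, directly from the mutation formula, that the correction term vanishes in the skew-symmetrizable case. The sink/source condition is stated only on the column $b_{jk;t}$, so we need sign-skew-symmetry ($b_{ij}$ and $b_{ji}$ have opposite signs, which follows from $DB$ being skew-symmetric). This gives the corresponding sign condition on the row $b_{kj;t}$, after which the vanishing is immediate.
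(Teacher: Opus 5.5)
Your proof is correct. It rests on the same underlying fact as the paper's proof but is organized differently.

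\textbf{The paper's route.} The paper's proof is one sentence: every $B_s$ on a path in $\mathcal{P}_1^{ss}(t_0)$ is acyclic, hence $\mathcal{B}^{ss}(t_0)$ is finite. Read literally, this leaves implicit why acyclicity bounds the number of classes. One needs that the acyclic matrices mutation-equivalent to $B_{t_0}$ fall into finitely many classes $[\,\cdot\,]$. That follows from Lemma \ref{lem:APR-mut} together with the fact that sink/source mutations only reorient arrows of a fixed valued graph, or directly from your computation. What the paper's phrasing buys is brevity and a tie-in to the acyclic framework it already uses.

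\textbf{Your route.} You bypass acyclicity in the counting step. Your local observation is that a sink or source mutation is conjugation by a diagonal sign matrix, since $b_{ik}b_{kj}\le 0$ kills the correction term. This gives $B_s=EB_{t_0}E$, so the argument is self-contained and yields an explicit bound. The bound is in fact $2^{n-1}$, since $E$ and $-E$ give the same matrix.

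\textbf{Small simplification.} Your check that each $B_s$ stays acyclic is unnecessary. That every step is a sink or source mutation is already part of the definition of a sink-source sequence, and your row/column-negation step uses only that.
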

\begin{proof}
    Note that if $p(t_0,t)\in \mathcal{P}_1^{ss}(t_0)$, then $B_s$ is acyclic for any $s\in p(t_0,t)$. It follows that the set $\mathcal{B}^{ss}(t_0)$ is finite.
\end{proof}
Now we are in the position to state the main result of this subsection, which is essentially implicit in \cite{ASS12}.
\begin{theorem}\label{thm:main-ayclic}
The cluster automorphism group $\Aut(\mathcal{A})$ is finitely generated.
\end{theorem}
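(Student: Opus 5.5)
The plan is to mimic the proof of Lemma \ref{lem:fin-gen}, with $H_1(t_0)$ replaced by $H_1^{ss}(t_0)$ and $\mathcal{B}(t_0)$ replaced by $\mathcal{B}^{ss}(t_0)$. By Lemma \ref{l:gen-set-acyclic}, $\Aut(\mathcal{A})$ is generated by $G_0(t_0)\cup H_1^{ss}(t_0)$. Since $G_0(t_0)$ is finite, it suffices to show that every element of $H_1^{ss}(t_0)$ lies in the subgroup generated by $G_0(t_0)$ and a finite subset of $H_1^{ss}(t_0)$.

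By the preceding lemma, $\mathcal{B}^{ss}(t_0)=\{[B_{t_0}],[B_1],\dots,[B_m]\}$ is finite. For each $j$, choose a shortest sink-source path from some vertex $u$ with $[B_u]=[B_j]$ to a vertex $s$ with $[B_s]=[B_{t_0}]$; let $l_j$ be its length and $l=\max_j l_j$. Such paths exist because we can reverse the sink-source path from $t_0$ through a vertex with class $[B_j]$: the inverse of a sink (respectively source) mutation is a source (respectively sink) mutation of the mutated seed. Split $H_1^{ss}(t_0)$ into elements with $\ell(t_0,t)<2l+2$, a finite set, and those with $\ell(t_0,t)\ge 2l+2$. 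The latter are handled by induction on length.

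For the inductive step, take $f=f_{p(t_0,t)}$ with $p(t_0,t)\in\mathcal{P}_1^{ss}(t_0)$ and $\ell(t_0,t)\ge 2l+2$. Pick $t'$ on the path with $\ell(t_0,t'),\ell(t',t)\ge l+1$, so $[B_{t'}]=[B_j]$ for some $j$. Since $[B_{t'}]$ agrees with the class of the vertex realizing $l_j$, Lemma \ref{lemma:Sn-action-formula} (together with sign reversal, which swaps sinks and sources) transports that path to a sink-source path of length $l_j\le l$ from $t'$ to a vertex $s$ with $[B_s]=[B_{t_0}]$. Then $\ell(t_0,s)$ and $\ell(s,t)$ are both strictly less than $\ell(t_0,t)$, and $p(t_0,s)$ and $p(s,t)$ are sink-source sequences, being concatenations of sink-source segments, modulo backtracking cancellations in the tree, which remain fine.

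Then apply Lemma \ref{lem:factorization-cluster-auto} with $g=f_{p(t_0,s)}$ to write $f=g\circ h$. By Lemma \ref{lem:refine-factorization}, $h$ corresponds to a sink-source path of length $\ell(s,t)<\ell(t_0,t)$ with weight at least $2$. If the weight is exactly $2$, $h$ is $f_{p(t_0,t'')}$ times an element of $G_0(t_0)$ with $p(t_0,t'')\in\mathcal{P}_1^{ss}(t_0)$ shorter, so induction applies. Otherwise, the factorization argument in Lemma \ref{l:gen-set-acyclic} splits it into shorter $\mathcal{P}_1^{ss}$ pieces. The same applies to $g$.

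The main obstacle is ensuring that the splicing preserves the sink-source property. The $\mathcal{P}_1^{ss}$ condition uses sink-source sequences starting at $t_0$, so I must check that the segment $p(t',s)$ concatenated with $p(t_0,t')$, and $p(s,t')$ with $p(t',t)$, remain sink-source. Because sinks and sources are determined by $B_{t'}$ up to the $S_n$-relabeling and sign that identify $[B_{t'}]=[B_j]$, this reduces to the equivariance in Lemma \ref{lemma:Sn-action-formula}. I would verify this carefully first.
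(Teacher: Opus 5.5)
Your proposal is correct and follows essentially the same route as the paper. You reduce to $G_0(t_0)\cup H_1^{ss}(t_0)$ via Lemma \ref{l:gen-set-acyclic}, use the finiteness of $\mathcal{B}^{ss}(t_0)$ to get the bound $2l+2$, and split a long sink-source path at a midpoint $t'$ using a transported minimal sink-source path to a vertex $s$ of class $[B_{t_0}]$. Your fallback for weight larger than $2$ is harmless but unnecessary: the minimality of $l_j$, together with $\ell(t',t_0),\ell(t',t)\ge l+1$, already forces $p(t_0,s)$ and $p(s,t)$ to have weight exactly $2$, which the paper uses implicitly.
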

\begin{proof}
 
The proof is similar to the one of Lemma \ref{lem:fin-gen}. Namely, let \[\mathcal{B}^{ss}(t_0)=\{[B_{t_0}],[B_1],\ldots, [B_m]\},\]  and for each $1\leq i\leq m$, define
\[
l_i=\min \{\ell(t_0,t_i)\mid [B_{t_i}]=[B_i], \text{$p(t_0,t_i)$ is labeled by a sink-source sequence}\}.
\]
Set $l=\max\{l_1,\ldots, l_m\}$. By Lemma \ref{l:gen-set-acyclic}, it suffices to show that $H_1^{ss}(t_0)$ is generated by a finite set. We divide $H_1^{ss}(t_0)$ into the following subsets:
 \begin{align*}
       H_1^{ss}(t_0)^{< 2l+2 }:=\{f_{p(t_0,t)}\mid p(t_0,t)\in \mathcal{P}_1^{ss}(t_0), \ell(t_0,t)<2l+2 \},&    \\
       H_1^{ss}(t_0)^{\ge 2l+2 }:=\{f_{p(t_0,t)}\mid p(t_0,t)\in \mathcal{P}_1^{ss}(t_0), \ell(t_0,t)\ge 2l+2 \}.&
   \end{align*}
   It suffices to prove that the second subset is generated by $G_0(t_0)\cup H_1^{ss}(t_0)^{<2l+2}$.
   Assume that we have shown that $f_{p(t_0,u)}$ is generated by $G_0(t_0)\cup H_1^{ss}(t_0)^{<2l+2}$ for any $p(t_0,u)\in \mathcal{P}_1^{ss}(t_0)$ with $2l+2\leq \ell(t_0,u)<k$.

   Let $p(t_0,t)\in \mathcal{P}_1^{ss}(t_0)$ with $\ell(t_0,t)=k$. We choose a vertex $t'\in p(t_0,t)$ such that $\ell(t_0,t')\geq l+1$ and $\ell(t',t)\geq l+1$. By the definition of $\mathcal{B}^{ss}(t_0)$, there is a $1\leq j\leq m$ such that $[B_{t'}]=[B_j]$. It follows that there exists a vertex $s\in \mathbb{T}_n$ such that $\ell(t',s)=l_j$, $[B_s]=[B_{t_0}]$ and $p(t',s)$ is labeled by a sink-source sequence. As a consequence, $p(t_0,s)\in \mathcal{P}_1^{ss}(t_0)$ and $p(s,t)$ is also labeled by a sink-source sequence. Moreover, $\ell(t_0,s)<\ell(t_0,t)$ and $\ell(s,t)<\ell(t_0,t)$. By Lemma \ref{lem:factorization-cluster-auto}, there is  an $h=(\Sigma_{t_0},\Sigma_{t''},\sigma,\epsilon)\in G_1(t_0)$ such that $f_{p(t_0,t)}=f_{p(t_0,s)}\circ h$. Furthermore, $p(t_0,t'')\in \mathcal{P}_1^{ss}(t_0)$ with $\ell(t_0,t'')=\ell(s,t)<\ell(t_0,t)$. Hence, $h$ is a product of $f_{p(t_0,t'')}$ with an element in $G_0(t_0)$. This finishes the proof.

\end{proof}

\section{Examples}
In this section, we present some examples on how to calculate cluster automorphism groups based on the proofs in Section \ref{s:main-results}. Let $f=(\Sigma_{t_0},\Sigma_t,\sigma, \epsilon)$ be a cluster automorphism. If $t=\mu_{i_k}\cdots\mu_{i_1}(t_0)$, then we denote $f$ by
\[
f=g_{i_k\cdots i_1}^{\sigma,\epsilon}.
\]
If, in addition, $t=t_0$, we simply write $f=\psi_{\sigma}^{\epsilon}$.

\begin{example}\label{eg:rank-3-acyclic}
    Let $2\leq b\leq c$.
    Let $B$ be a skew-symmetrizable matrix whose associated weighted directed graph $\Gamma_B$ is given by:
    \[
    \xymatrix@C=0.5cm{&1\ar[dr]^c&\\ 2\ar[ur]^b\ar[rr]_{\sqrt{3}}&&3.}
    \]
    We remark that the matrix $B$ corresponding to $\Gamma_B$ is not unique; however, this is independent on the computation of the cluster automorphism group. Fix a cluster pattern $\mathbf{\Sigma}$ such that $B_{t_0}=B$, and let $\mathcal{A}$ be the associated cluster algebra. A direct computation shows that $|\mathcal{P}_1^{ss}(t_0)|=2$, see Figure \ref{fig:sink-source-sequence-sqrt{3}bc}. Here we present the exchange matrices by weighted directed graphs. Let $t_1=\mu_3\mu_1\mu_2(t_0)$ and $t_2=\mu_{2}\mu_1\mu_3(t_0)$.
   The set $H_1^{ss}(t_0)$ can be chosen as
    \[
    H_1^{ss}(t_0)=\{g_{312}^{\id,+},g_{213}^{\id,+}\}.
    \]
    It is easy to see that $G_0(t_0)=\{\mathbf{id}\}$ and hence $\Aut(\mathcal{A})$ is generated by $H_1^{ss}(t_0)$ by Lemma \ref{l:gen-set-acyclic}.
  Furthermore, 
  \begin{eqnarray*}
      g_{312}^{\id,+}\circ g_{213}^{\id,+}&=& (\mu_{3}\mu_1\mu_2(\Sigma_{t_0}),\mu_{3}\mu_1\mu_2(\Sigma_{t_2}),\id,+)\circ(\Sigma_{t_0}, \Sigma_{t_1},\id,+)\\
      &=&(\Sigma_{t_1},\Sigma_{t_0},\id, +)\circ (\Sigma_{t_0}, \Sigma_{t_1},\id,+)\\
      &=&\mathbf{id}.
  \end{eqnarray*}
  Hence, $\Aut(\mathcal{A})$ is generated by $f$ and we conclude that $\Aut(\mathcal{A})\cong \mathbb{Z}$ by \cite[Lemma 6.2]{FL25}.

\end{example}
  
    \begin{figure}
    \centering
\begin{tikzpicture}[
    >=Stealth,
    quivernode/.style={inner sep=1pt},
    quiveredge/.style={->, line width=0.5pt},
    blueq/.style={blue},
    redq/.style={red},
    orangeq/.style={orange!80!black},
    blackq/.style={black}
]

% --- Basic quiver types ---
\newcommand{\drawquivera}[1]{
  \begin{tikzpicture}[scale=1.25]
    \node[quivernode,#1,font=\tiny] (#1-1) at (0,0){1};
    \node[quivernode,#1,font=\tiny] (#1-2) at (-0.5,-0.85){2};
    \node[quivernode,#1,font=\tiny] (#1-3) at (0.5,-0.85){3};
    \draw[quiveredge,#1,font=\tiny] (#1-2)--node[midway,above,sloped, font=\tiny]{$b$}(#1-1);
    \draw[quiveredge,#1] (#1-2)--node[midway,below,font=\tiny]{$\sqrt{3}$}(#1-3);
    \draw[quiveredge,#1] (#1-1)--node[midway,above,sloped,font=\tiny]{$c$}(#1-3);
  \end{tikzpicture}
}

\newcommand{\drawquiverb}[1]{
  \begin{tikzpicture}[scale=1.25]
    \node[quivernode,#1,font=\tiny] (#1-1) at (0,0){1};
    \node[quivernode,#1,font=\tiny] (#1-2) at (-0.5,-0.85){2};
    \node[quivernode,#1,font=\tiny] (#1-3) at (0.5,-0.85){3};
    \draw[quiveredge,#1] (#1-1)--node[midway,above,sloped,font=\tiny]{$b$}(#1-2);
    \draw[quiveredge,#1] (#1-1)--node[midway,above,sloped,font=\tiny]{$c$}(#1-3);
    \draw[quiveredge,#1] (#1-3)--node[midway,below,font=\tiny]{$\sqrt{3}$}(#1-2);
  \end{tikzpicture}
}

\newcommand{\drawquiverc}[1]{
  \begin{tikzpicture}[scale=1.25]
    \node[quivernode,#1,font=\tiny] (#1-1) at (0,0){1};
    \node[quivernode,#1, font=\tiny] (#1-2) at (-0.5,-0.85){ 2};
    \node[quivernode,#1,font=\tiny] (#1-3) at (0.5,-0.85){3};
    \draw[quiveredge,#1] (#1-2)--node[midway,above,sloped,font=\tiny]{$b$}(#1-1);
    \draw[quiveredge,#1] (#1-3)--node[midway,above,sloped,font=\tiny]{$c$}(#1-1);
    \draw[quiveredge,#1] (#1-3)--node[midway,below,font=\tiny]{$\sqrt{3}$}(#1-2);
  \end{tikzpicture}
}

% --- Matrix layout ---
\matrix[matrix of nodes, nodes={anchor=center}, row sep=0.6cm, column sep=0.6cm] (m) {
\node(a1){\drawquivera{blueq}};&\node(a2){\drawquiverb{blackq}};&\node(a3){\drawquiverc{blackq}};&\node(a4){\drawquivera{blueq}};\\
\node(b1){\drawquiverc{blackq}};&\node(b2){\drawquiverb{blackq}};&\node(b3){\drawquivera{blueq}};&\\
};

% --- Connectors ---

\draw[-] (a1) -- (a2) node[midway,above] {\tiny $\mu_2$};
\draw[-] (a2) -- (a3) node[midway,above] {\tiny$\mu_1$};
\draw[-] (a3) -- (a4) node[midway,above] {\tiny$\mu_3$};

\draw[-] (a1) -- (b1) node[midway,right] {\tiny $\mu_3$};
\draw[-] (b1) -- (b2) node[midway,above] {\tiny $\mu_1$};
\draw[-] (b2) -- (b3) node[midway,above] {\tiny $\mu_2$};

\node[left=1pt of a1.west] {\tiny $t_0:$};

\end{tikzpicture}
   \caption{Sink-source sequences}
   \label{fig:sink-source-sequence-sqrt{3}bc}
\end{figure}

\begin{example}\label{exam-acyclic-cluster-algebra}
Let $a_1,a_2,b_1,b_2,c_1,c_2$ be positive integers and $B=\begin{pmatrix}
        0&a_1&0&0\\ -a_2&0&b_1&0\\ 0&-b_2&0&c_1\\ 0&0&-c_2&0
    \end{pmatrix}$. Assume that $B$ is skew-symmetrizable and $2\leq a:=\sqrt{a_1a_2}<b:=\sqrt{b_1b_2}<c:=\sqrt{c_1c_2}$.
    The associated weighted directed graph/quiver is given by
    \[
    \xymatrix{1\ar[r]^a&2\ar[r]^b&3\ar[r]^c&4.}
    \]
    Fix a cluster pattern $\mathbf{\Sigma}$ such that $B_{t_0}=B$, and let $\mathcal{A}$ be the associated cluster algebra.
    We are going to compute the cluster automorphism group of $\mathcal{A}$.

    Clearly, $\mathcal{B}^{ss}(t_0)$ consists of $4$ elements, say $[B],[B_1],[B_2],[B_3]$, where the associated weighted directed graphs are listed as follows:
    \[
    \xymatrix{\Gamma_{B_1}:1\ar[r]^{a}&2\ar[r]^b&3&4\ar[l]_c, }
    \]
    \[
    \xymatrix{\Gamma_{B_2}:1\ar[r]^a&2&3\ar[l]_b\ar[r]^c&4,}
    \]
    \[
    \xymatrix{\Gamma_{B_3}:1\ar[r]^a&2&3\ar[l]_b&4\ar[l]_c.}
    \]
    A direct computation shows that $l_1=1,l_2=2,l_3=1$ and hence $l=2$. The condition $2\leq a<b<c$ implies that $G_0(t_0)=\{\id\}$. It follows that $\Aut(\mathcal{A})$ is generated by $H^{ss}_1(t_0)^{<6}$ by the proof of Theorem \ref{thm:main-ayclic}. Denote by $\mathcal{P}_1^{ss}(t_0)^{<6}:=\{p(t_0,t)\mid p(t_0,t)\in \mathcal{P}_1^{ss}(t_0),\ell(t_0,t)<6\}$. It is routine to see that $|\mathcal{P}_1^{ss}(t_0)^{<6}|=12$ and the set $H_1^{ss}(t_0)^{<6}$ can be chosen as 
    \[
    H_1^{ss}(t_0)^{<6}=\{g_{4121}^{\id,-},g_{4321}^{\id, +},g_{1421}^{\id,-},g_{4141}^{\id,+},g_{1241}^{\id,-},g_{4341}^{\id,-},g_{1414}^{\id,+},g_{1214}^{\id,-},g_{4314}^{\id,-},g_{4134}^{\id,-},g_{1434}^{\id,-},g_{1234}^{\id,+}\}.
    \]
    Note that $\mu_1\mu_4=\mu_4\mu_1$. It follows that 
    \[g_{1414}^{\id,+}=g_{4141}^{\id,+}=\mathbf{id}, g_{4121}^{\id,-}=g_{1421}^{\id,-},\] \[g_{1241}^{\id,-}=g_{1214}^{\id,-},g_{4341}^{\id,-}=g_{4314}^{\id,-},g_{4134}^{\id,-}=g_{1434}^{\id,-}.\] On the other hand, by Proposition \ref{prop:expression-composition-cluster-auto}, we compute
    \[g_{4121}^{\id,-}\circ g_{1214}^{\id,-}=\mathbf{id},g_{4321}^{\id,+}\circ g_{1234}^{\id,+}=\mathbf{id},
g_{4314}^{\id,-}\circ g_{4134}^{\id,-}=\mathbf{id}.
    \]
    Moreover, $g_{4314}^{\id,-}\circ g_{1234}^{\id,+}=g_{1214}^{\id,-}$. Putting all of these together, we conclude that $\Aut(\mathcal{A})$ is generated by $g_{1234}^{\id,+}$ and $g_{4314}^{\id,-}$. Again by noticing that $\mu_3\mu_1=\mu_1\mu_3$, we obtain $g_{4314}^{\id,-}\circ g_{4314}^{\id,-}=\mathbf{id}$. Similar to \cite[Lemma 6.2]{FL25} or by using representations of valued quivers, one can show that the order of $g_{1234}^{\id,+}$ is $\infty$. Furthermore,
    \[
    g_{1234}^{\id,+}\circ g_{4314}^{\id,-}=g_{4314}^{\id,-}\circ g_{4321}^{\id,+}
    \]
    by noticing that $\mu_2\mu_4=\mu_4\mu_2$. Now it is routine to show that $\Aut(\mathcal{A})$ is isomorphic to 
   the infinite dihedral group $D_\infty:=\langle x,y~|~y^2=\mathbf{id}, xy=yx^{-1}\rangle$.
\end{example}

\begin{example}\label{exam:infinite-exchange-matrix}
    Let $B=\begin{pmatrix}
        0 &2 &0&0\\ -2&0&2&0\\ 0&-2 &0&2\\ 0&0&-2&0
    \end{pmatrix}$ and $\Gamma_B$ be the associated weighted directed quiver. Fix a cluster pattern $\mathbf{\Sigma}$ such that $B_{t_0}=B$ and let $\mathcal{A}$ the associated cluster algebra. It is an acyclic cluster algebra, and hence $\Aut(\mathcal{A})$ is finitely generated by Theorem \ref{thm:main-ayclic}.  

    In the case where $a=b=c=2$, which is slightly different from the previous example. A direct computation shows that the group $\Aut(\mathcal{A})$ is generated by $\psi^-_{(14)(23)}$, $g_{1234}^{\id,+}$ and $g_{4314}^{\id,-}$. Moreover, the subgroup generated by $g_{1234}^{\id,+}$ and $g_{4314}^{\id,-}$ is isomorphic to $D_\infty$, and
    \begin{align*}
    &\psi^-_{(14)(23)}\circ g_{1234}^{\id,+}\circ \psi^-_{(14)(23)}=g_{4321}^{\id,+},\\
    &\psi^-_{(14)(23)}\circ g_{4314}^{\id,+}\circ \psi^-_{(14)(23)}=g_{4314}^{\id,-}\circ g_{1234}^{\id,+}.  
    \end{align*}
    Therefore, it follows that $\Aut(\mathcal{A})\cong D_\infty \rtimes \mathbb{Z}_2$.

    We will see that $\mathcal{B}(t_0)$ is infinite. Consequently, the finiteness of $\mathcal{B}(t_0)$ is not a necessary condition for the finite generation of $\Aut(\mathcal{A})$.
    
    Let 
    \[
\xymatrix{t_0\ar[r]^2&1\ar[r]^1&2\ar[r]^2&3\ar[r]^1&4\ar[r]^2&5\ar[r]^1&6\ar[r]^2&7\ar[r]^1&\cdots}
    \]
    be a subtree of the $4$-regular tree $\mathbb{T}_4$. For each $i\in \mathbb{N}$, we denote by $B_i$ the exchange matrix at $i$, and 
    denote by $w_i$ the sum of the weights in the weighted directed quiver $\Gamma_{B_i}$ associated to $B_i$. A direct computation shows that
     \begin{itemize}
         \item $w_{i+1}>w_i$ for all $i\in \mathbb{N}$. In particular, $[B_i]\neq [B_j]$ whenever $i\neq j\in \mathbb{N}$.
         \item For each $i\in \mathbb{N}$, there is no arrows between vertices $4$ and $1$ or $2$ in $\Gamma_{B_i}$.
     \end{itemize}
     For $i\geq 1$, denote 
     \[t_i = \begin{cases} 
\mu_4(\mu_2\mu_1)^{\frac{i}{2}} \mu_4 (\mu_1\mu_2)^{\frac{i}{2}}, & \text{if } i \in 2\mathbb{Z}; \\
\mu_4\mu_2(\mu_1\mu_2)^{\frac{i-1}{2}} \mu_4 (\mu_2\mu_1)^{\frac{i-1}{2}} \mu_2, & \text{if } i \in 2\mathbb{Z}+1.
\end{cases}\]
It is routine to see that $p(t_0,t_i)\in \mathcal{P}_1(t_0)$ for all $i\geq 1$. It follows that $\mathcal{B}(t_0)$ is an infinite set since $[B_i]\in \mathcal{B}(t_0)$. However, it is worth noting that the automorphisms induced by the paths $p(t_0,t_i)$ all belong to the subgroup $G_0(t_0)$. 

\end{example}

\begin{example}\label{exam:type-X7}
 We end this section by trying to calculate the cluster automorphism group for the cluster algebra $\mathcal{A}$ of type $X_7$. Its weighted direct graph is given in Figure \ref{fig:$X_7$}.

\begin{figure}[htbp]
\centering
\begin{tikzpicture}[
    >=Latex,
    v/.style={inner sep=0.3pt, font=\fontsize{6}{7}\selectfont},
    lab/.style={fill=white, inner sep=0.6pt, font=\fontsize{5}{6}\selectfont},
    edge/.style={-{Stealth[length=1.5mm,width=1.0mm]}, line width=0.55pt},
    aux/.style={line width=0.55pt},
    mulab/.style={font=\scriptsize}
]

% 左侧总标签
\node[left] at (-0.55,0) {$t_0:$};
\node[left] at (-0.55,-3.12) {$s:$};

\begin{scope}[xshift=0.8cm]

%==================================================
% 图 1：第一行左图（t_0 的第一个图）
%==================================================
\begin{scope}[shift={(0,0)}, scale=0.667]
    \node[v] (a1) at (0,0) {1};
    \node[v] (a2) at (-1.333,0) {2};
    \node[v] (a3) at (-0.667,1.155) {3};
    \node[v] (a4) at (0.667,1.155) {4};
    \node[v] (a5) at (1.333,0) {5};
    \node[v] (a6) at (0.667,-1.155) {6};
    \node[v] (a7) at (-0.667,-1.155) {7};

    \draw[edge] (a2) -- node[lab,midway,above left=2.2pt] {2} (a3);
    \draw[edge] (a3) -- (a1);
    \draw[edge] (a1) -- (a2);

    \draw[edge] (a1) -- (a4);
    \draw[edge] (a4) -- node[lab,midway,above right=2.2pt] {2} (a5);
    \draw[edge] (a5) -- (a1);

    \draw[edge] (a7) -- (a1);
    \draw[edge] (a1) -- (a6);
    \draw[edge] (a6) -- node[lab,midway,below=2.2pt] {2} (a7);
\end{scope}

% 图1 -> 图2 的变换标签
\draw[aux] (1.8375,0) -- (2.5125,0);
\node[mulab,above] at (2.175,0) {$\mu_2$};

%==================================================
% 图 2：第一行右图（t_0 的第二个图）
%==================================================
\begin{scope}[shift={(4.2,0)}, scale=0.667]
    \node[v] (b1) at (0,0) {1};
    \node[v] (b2) at (-1.333,0) {2};
    \node[v] (b3) at (-0.667,1.155) {3};
    \node[v] (b4) at (0.667,1.155) {4};
    \node[v] (b5) at (1.333,0) {5};
    \node[v] (b6) at (0.667,-1.155) {6};
    \node[v] (b7) at (-0.667,-1.155) {7};

    \draw[edge] (b2) -- (b1);
    \draw[edge] (b1) -- (b3);
    \draw[edge] (b3) -- node[lab,midway,above left=2.2pt] {2} (b2);

    \draw[edge] (b1) -- (b4);
    \draw[edge] (b4) -- node[lab,midway,above right=2.2pt] {2} (b5);
    \draw[edge] (b5) -- (b1);

    \draw[edge] (b7) -- (b1);
    \draw[edge] (b1) -- (b6);
    \draw[edge] (b6) -- node[lab,midway,below=2.2pt] {2} (b7);
\end{scope}

% 图1 -> 图3 的变换标签
\draw[aux] (0.0,-1.305) -- (0.0,-1.815);
\node[mulab,right] at (0.0,-1.56) {$\mu_1$};

%==================================================
% 图 3：第二行左图（S 的第一个图）
%==================================================
\begin{scope}[shift={(0,-3.12)}, scale=0.667]
    \node[v] (c1) at (0,0) {1};
    \node[v] (c2) at (-1.333,0) {2};
    \node[v] (c3) at (-0.667,1.155) {3};
    \node[v] (c4) at (0.667,1.155) {4};
    \node[v] (c5) at (1.333,0) {5};
    \node[v] (c6) at (0.667,-1.155) {6};
    \node[v] (c7) at (-0.667,-1.155) {7};

    % 六边形边
    \draw[edge] (c2) -- (c3);
    \draw[edge] (c3) -- (c4);
    \draw[edge] (c4) -- (c5);
    \draw[edge] (c5) -- (c6);
    \draw[edge] (c6) -- (c7);
    \draw[edge] (c7) -- (c2);

    % 与中心点 1 相连的边
    \draw[edge] (c2) -- (c1);
    \draw[edge] (c4) -- (c1);
    \draw[edge] (c6) -- (c1);

    \draw[edge] (c1) -- (c3);
    \draw[edge] (c1) -- (c5);
    \draw[edge] (c1) -- (c7);

    % 外包弧线
    \draw[edge] (c3) .. controls (1.85,2.45) and (2.55,-0.78) .. (c6);
    \draw[edge] (c5) .. controls (1.65,-2.20) and (-1.65,-2.20) .. (c2);
    \draw[edge] (c7) .. controls (-2.55,-0.78) and (-1.85,2.45) .. (c4);
\end{scope}

% 图3 -> 图4 的变换标签
\draw[aux] (1.8375,-3.12) -- (2.5125,-3.12);
\node[mulab,above] at (2.175,-3.12) {$\mu_2$};

%==================================================
% 图 4：第二行中图（S 的第二个图）
%==================================================
\begin{scope}[shift={(4.2,-3.12)}, scale=0.667]
    \node[v] (d3) at (0,0) {3};
    \node[v] (d1) at (-1.333,0) {1};
    \node[v] (d2) at (-0.667,1.155) {2};
    \node[v] (d5) at (0.667,1.155) {5};
    \node[v] (d6) at (1.333,0) {6};
    \node[v] (d7) at (0.667,-1.155) {7};
    \node[v] (d4) at (-0.667,-1.155) {4};

    % 外圈边
    \draw[edge] (d1) -- (d2);
    \draw[edge] (d2) -- (d5);
    \draw[edge] (d5) -- (d6);
    \draw[edge] (d6) -- (d7);
    \draw[edge] (d7) -- (d4);
    \draw[edge] (d4) -- (d1);

    % 与中心点 3 相连的边
    \draw[edge] (d1) -- (d3);
    \draw[edge] (d5) -- (d3);
    \draw[edge] (d7) -- (d3);

    \draw[edge] (d3) -- (d2);
    \draw[edge] (d3) -- (d6);
    \draw[edge] (d3) -- (d4);

    % 外包弧线
    \draw[edge] (d2) .. controls (1.85,2.45) and (2.55,-0.78) .. (d7);
    \draw[edge] (d6) .. controls (1.65,-2.20) and (-1.65,-2.20) .. (d1);
    \draw[edge] (d4) .. controls (-2.55,-0.78) and (-1.85,2.45) .. (d5);
\end{scope}

% 图4 -> 图5 的变换标签
\draw[aux] (6.0375,-3.12) -- (6.7125,-3.12);
\node[mulab,above] at (6.375,-3.12) {$\mu_3$};

%==================================================
% 图 5：第二行右图（S 的第三个图）
%==================================================
\begin{scope}[shift={(8.4,-3.12)}, scale=0.667]
    \node[v] (e3) at (0,0) {3};
    \node[v] (e2) at (-0.667,1.155) {2};
    \node[v] (e1) at (-1.333,0) {1};
    \node[v] (e5) at (0.667,1.155) {5};
    \node[v] (e6) at (1.333,0) {6};
    \node[v] (e7) at (0.667,-1.155) {7};
    \node[v] (e4) at (-0.667,-1.155) {4};

    \draw[edge] (e1) -- node[lab,midway,above left=2.2pt] {2} (e2);
    \draw[edge] (e2) -- (e3);
    \draw[edge] (e3) -- (e1);

    \draw[edge] (e3) -- (e5);
    \draw[edge] (e5) -- node[lab,midway,above right=2.2pt] {2} (e6);
    \draw[edge] (e6) -- (e3);

    \draw[edge] (e3) -- (e7);
    \draw[edge] (e7) -- node[lab,midway,below=2.2pt] {2} (e4);
    \draw[edge] (e4) -- (e3);
\end{scope}

\end{scope}
\end{tikzpicture}
\caption{The mutation of $X_7$}
\label{fig:$X_7$}
\end{figure}

Let $G^+_0(t_0) := \{ \psi^+_{\sigma} \mid \sigma B_{t_0} = B_{t_0} \}$ be a normal subgroup of $G_0(t_0)$. Due to the symmetry of the quiver $X_7$, it is clear that $G^+_0(t_0) \cong S_3$. If we denote $a = \psi^+_{(24)(35)}$ and $b = \psi^+_{(46)(57)}$, then $G^+_0(t_0)$ admits the presentation $\langle a, b \mid a^2 = b^2 = (ab)^3 = \mathbf{id} \rangle$. Furthermore, $G_0(t_0)$ can be decomposed as a direct product:$$  G_0(t_0) = G_0^+(t_0) \times \langle \tau \rangle,$$where $\tau = \psi^-_{(23)(45)(67)}$ is a cluster automorphism of order $2$ that commutes with both $a$ and $b$.

A direct computation shows that $|\mathcal{B}(t_0)|=2$ and $l=1$. Hence $\operatorname{Aut}(\mathcal{A})$ is generated by $G_0(t_0)\cup H_1(t_0)^{<4}$. Another straightforward computation yields $[B_{\mu_i(t_0)}]=[B_{t_0}]$ for $i\neq 1$, and $[B_s]=[B_{\mu_i(s)}]$ for $i\neq 1$, where $s=\mu_1(t_0)$. Furthermore, taking $\mu_2(s)$ as an example, we have \begin{align*}
[B_{\mu_i\mu_2(s)}]=
    \begin{cases}
    [B_{t_0}] &\text{$i=3$};\\
    [B_{s}] &\text{else}. 
    \end{cases}
\end{align*}
It follows that $|H_1(t_0)^{<4}|=12$. These automorphisms are associated with the mutation sequences $\mu_i$ for $i \neq 1$, as well as: 
\[
 \mu_3\mu_2\mu_1, \mu_2\mu_3\mu_1, \mu_5\mu_4\mu_1, \mu_4\mu_5\mu_1,\mu_7\mu_6\mu_1,\mu_6\mu_7\mu_1.
 \]
A direct computation further shows that:
\begin{align*}
    & g_3^{(23),+}=\left(g_2^{(23),+}\right)^{-1},\\
    & g_4^{(45),+}=\psi^+_{(24)(35)}\circ g_2^{(23),+}\circ \psi^+_{(24)(35)}.
\end{align*}
Similarly, $\forall i \in \{2,3,4,5,6,7\}$, $g_i^{\sigma,\varepsilon}$ is generated by $G_0(t_0)\cup \{g_2^{(23),+}\}$. Moreover, 
\begin{align*}
&g^{(123)(4567),+}_{231}=\left(g^{(132)(4567),+}_{321}\right)^{-1}\circ \psi^+_{(46)(57)},\\
&g^{(154)(2763),+}_{541}=\psi^+_{(24)(35)}\circ g^{(132)(4567),+}_{321}\circ \psi^+_{(264)(375)}.
\end{align*}
Analogously, one can show that the remaining cluster automorphisms associated with paths of length $3$ passing through the vertex $s$ are also generated by  $G_0(t_0)\cup \{g^{(132)(4567),+}_{321}\}$.

A more involved calculation yields the following relation:
\[
    g_2^{(23),+}\circ \psi^+_{(46)(57)} = \psi^+_{(46)(57)} \circ g_2^{(23),+}=\left(g^{(132)(4567),+}_{321}\right)^2.
\]
Considering the restriction of $g_2^{(23),+}$ to the rank $2$ case, it follows  that $g_2^{(23),+}$ has infinite order, cf. \cite[Lemma 4.1]{FT26}. Consequently, $g^{(132)(4567),+}_{321}$ is also of infinite order.

For simplicity, denote $f=g^{(132)(4567),+}_{321}$. Then $\operatorname{Aut}(\mathcal{A})$ is generated by $\{a,b,\tau,f\}$. It can be checked that
\begin{align*}
b \circ f &=f \circ b,\\
\tau \circ f &=f^{-1}\circ \tau.
\end{align*}

To determine the defining relations of $\operatorname{Aut}(\mathcal{A})$, it remains to characterize the relations between $a$ and $f$. A tedious but straightforward computation shows that
\begin{align*}
& \left( af \right)^5=\mathbf{id}, \\
& \left( afaf^{-1} \right)^3=\mathbf{id}, \\
& \left( af^2\right)^3=\left(f^2a\right)^3,\\
& \left( af^{2}af^{-2} \right)^3=\mathbf{id}, \\
& \left( af^{3}af^{-3} \right)^3=\mathbf{id},\\
& \left( af^{4}\right)^2=\left(f^{4}a \right)^2,\\
& \left( af^{6}af^{-6} \right)^3=\mathbf{id}.
\end{align*}
The complexity of these relations indicates that finding a full presentation is no trivial matter. 
Our extensive computations suggest that the following pattern may hold: for every positive integer $k$,
\begin{align*}
    (\left(af^{2k} \right)^k \left(af^{-2k} \right)^k)^3=\mathbf{id} &\quad \text{if}\ k\equiv 1\ \text{or}\ 5 \pmod{6};\\
    \left(af^{2k} \right)^k \left(af^{-2k} \right)^k=\mathbf{id} &\quad \text{otherwise}.
\end{align*}
Consequently, it may be more prudent to first address the question of whether $\operatorname{Aut}(\mathcal{A})$ is finitely presented.

\end{example}

\bibliographystyle{plain}
\bibliography{myref}

@incollection {FG06,
    AUTHOR = {Fock, V. V. and Goncharov, A. B.},
     TITLE = {Cluster {$\mathcal X$}-varieties, amalgamation, and
              {P}oisson-{L}ie groups},
 BOOKTITLE = {Algebraic geometry and number theory},
    SERIES = {Progr. Math.},
    VOLUME = {253},
     PAGES = {27--68},
 PUBLISHER = {Birkh\"auser Boston, Boston, MA},
      YEAR = {2006},
      ISBN = {978-0-8176-4471-0; 0-8176-4471-7},
   MRCLASS = {22E46 (05E15 20G42 53D17)},
  MRNUMBER = {2263192},
       DOI = {10.1007/978-0-8176-4532-8\_2},
       URL = {https://doi.org/10.1007/978-0-8176-4532-8_2},
}

@article {Fr20,
    AUTHOR = {Fraser, Chris},
     TITLE = {Braid group symmetries of {G}rassmannian cluster algebras},
   JOURNAL = {Selecta Math. (N.S.)},
  FJOURNAL = {Selecta Mathematica. New Series},
    VOLUME = {26},
      YEAR = {2020},
    NUMBER = {2},
     PAGES = {Paper No. 17, 51},
      ISSN = {1022-1824,1420-9020},
   MRCLASS = {13F60 (14M15)},
  MRNUMBER = {4066538},
MRREVIEWER = {Fan\ Qin},
       DOI = {10.1007/s00029-020-0542-3},
       URL = {https://doi.org/10.1007/s00029-020-0542-3},
}

@article{CK06,
 author = {Caldero, Philippe and Keller, Bernhard},
 title = {From triangulated categories to cluster algebras. {II}.},
 fjournal = {Annales Scientifiques de l'{\'E}cole Normale Sup{\'e}rieure. Quatri{\`e}me S{\'e}rie},
 journal = {Ann. Sci. {\'E}c. Norm. Sup{\'e}r. (4)},
 issn = {0012-9593},
 volume = {39},
 number = {6},
 pages = {983--1009},
 year = {2006},
 language = {English},
 doi = {10.1016/j.ansens.2006.09.003},
 url = {https://eudml.org/doc/82705},
 zbMATH = {5149415},
 Zbl = {1115.18301}
}

@article{BD15,
	author = {Blanc, Jeremy and Dolgachev, Igor},
	doi = {10.1007/s00031-014-9289-2},
	eissn = {1531-586X},
	issn = {1083-4362},
	journal = {Transformation Groups},
	number = {1},
	orcid-numbers = {Blanc, Jeremy/0000-0003-3795-7423},
	pages = {1-20},
	title = {AUTOMORPHISMS OF CLUSTER ALGEBRAS OF RANK 2},
	unique-id = {WOS:000350699700001},
	volume = {20},
	year = {2015}}

@article {FZ02,
    AUTHOR = {Fomin, Sergey and Zelevinsky, Andrei},
     TITLE = {Cluster algebras. {I}. {F}oundations},
   JOURNAL = {J. Amer. Math. Soc.},
  FJOURNAL = {Journal of the American Mathematical Society},
    VOLUME = {15},
      YEAR = {2002},
    NUMBER = {2},
     PAGES = {497--529},
      ISSN = {0894-0347,1088-6834},
   MRCLASS = {16S99 (14M99 17B99)},
  MRNUMBER = {1887642},
MRREVIEWER = {Eric\ N.\ Sommers},
       DOI = {10.1090/S0894-0347-01-00385-X},
       URL = {https://doi.org/10.1090/S0894-0347-01-00385-X},
}

@article {FZ03,
    AUTHOR = {Fomin, Sergey and Zelevinsky, Andrei},
     TITLE = {Cluster algebras. {II}. {F}inite type classification},
   JOURNAL = {Invent. Math.},
  FJOURNAL = {Inventiones Mathematicae},
    VOLUME = {154},
      YEAR = {2003},
    NUMBER = {1},
     PAGES = {63--121},
      ISSN = {0020-9910,1432-1297},
   MRCLASS = {17B20 (05E15 16S99 52B12)},
  MRNUMBER = {2004457},
MRREVIEWER = {Eric\ N.\ Sommers},
       DOI = {10.1007/s00222-003-0302-y},
       URL = {https://doi.org/10.1007/s00222-003-0302-y},
}

@article {FZ07,
    AUTHOR = {Fomin, Sergey and Zelevinsky, Andrei},
     TITLE = {Cluster algebras. {IV}. {C}oefficients},
   JOURNAL = {Compos. Math.},
  FJOURNAL = {Compositio Mathematica},
    VOLUME = {143},
      YEAR = {2007},
    NUMBER = {1},
     PAGES = {112--164},
      ISSN = {0010-437X,1570-5846},
   MRCLASS = {16S99 (05E15 14M17 22E46)},
  MRNUMBER = {2295199},
MRREVIEWER = {Christof\ Gei\ss},
       DOI = {10.1112/S0010437X06002521},
       URL = {https://doi.org/10.1112/S0010437X06002521},
}

@article {FT26,
    AUTHOR = {Fu, Changjian and Tang, Xunyi},
     TITLE = {On cluster automorphism groups for decomposable exchange matrices},
   JOURNAL = {accepted by Adv in Math(Chinese)},
  YEAR = {2026},
}

@article {FL25,
    AUTHOR = {Fu, Changjian and Liang, Zhanhong},
     TITLE = {Pseudo $\mathbb{N}$-grading on cluster automorphism groups with application to cluster algebras of rank $3$},
    YEAR={2025},
   JOURNAL = {arXiv:2503.22101(2025)},
}

@article{ASS12,
	author = {Assem, Ibrahim and Schiffler, Ralf and Shramchenko, Vasilisa},
	doi = {10.1112/plms/pdr049},
	issn = {0024-6115},
	journal = {Proceeding of the London Mathematical Society},
	number = {6},
	pages = {1271-1302},
	title = {Cluster automorphisms},
	unique-id = {WOS:000305031100006},
	volume = {104},
	year = {2012}}

@article {Gu11,
    AUTHOR = {Gu, Weiwen},
     TITLE = {A decomposition algorithm for the oriented adjacency graph of
              the triangulations of a bordered surface with marked points},
   JOURNAL = {Electron. J. Combin.},
  FJOURNAL = {Electronic Journal of Combinatorics},
    VOLUME = {18},
      YEAR = {2011},
    NUMBER = {1},
     PAGES = {Paper 91, 45},
      ISSN = {1077-8926},
   MRCLASS = {05C10 (05C85)},
  MRNUMBER = {2795772},
       DOI = {10.37236/578},
       URL = {https://doi.org/10.37236/578},
}

@article {BS15,
    AUTHOR = {Bridgeland, Tom and Smith, Ivan},
     TITLE = {Quadratic differentials as stability conditions},
   JOURNAL = {Publ. Math. Inst. Hautes \'Etudes Sci.},
  FJOURNAL = {Publications Math\'ematiques. Institut de Hautes \'Etudes
              Scientifiques},
    VOLUME = {121},
      YEAR = {2015},
     PAGES = {155--278},
      ISSN = {0073-8301,1618-1913},
   MRCLASS = {14D20 (14N35 18E30 57M50 81T20)},
  MRNUMBER = {3349833},
MRREVIEWER = {Brent\ Pym},
       DOI = {10.1007/s10240-014-0066-5},
       URL = {https://doi.org/10.1007/s10240-014-0066-5},
}

@article {CZ16a,
    AUTHOR = {Chang, Wen and Zhu, Bin},
     TITLE = {Cluster automorphism groups of cluster algebras of finite
              type},
   JOURNAL = {J. Algebra},
  FJOURNAL = {Journal of Algebra},
    VOLUME = {447},
      YEAR = {2016},
     PAGES = {490--515},
      ISSN = {0021-8693,1090-266X},
   MRCLASS = {13F60},
  MRNUMBER = {3427647},
MRREVIEWER = {Ralf\ Schiffler},
       DOI = {10.1016/j.jalgebra.2015.09.045},
       URL = {https://doi.org/10.1016/j.jalgebra.2015.09.045},
}

@article {BM16,
    AUTHOR = {Bazier-Matte, V\'eronique},
     TITLE = {Unistructurality of cluster algebras of type {$\tilde{\Bbb
              A}$}},
   JOURNAL = {J. Algebra},
  FJOURNAL = {Journal of Algebra},
    VOLUME = {464},
      YEAR = {2016},
     PAGES = {297--315},
      ISSN = {0021-8693,1090-266X},
   MRCLASS = {13F60},
  MRNUMBER = {3533433},
MRREVIEWER = {Jianrong\ Li},
       DOI = {10.1016/j.jalgebra.2016.06.026},
       URL = {https://doi.org/10.1016/j.jalgebra.2016.06.026},
}

@article {BMP20,
    AUTHOR = {Bazier-Matte, V\'eronique and Plamondon, Pierre-Guy},
     TITLE = {Unistructurality of cluster algebras from unpunctured
              surfaces},
   JOURNAL = {Proc. Amer. Math. Soc.},
  FJOURNAL = {Proceedings of the American Mathematical Society},
    VOLUME = {148},
      YEAR = {2020},
    NUMBER = {6},
     PAGES = {2397--2409},
      ISSN = {0002-9939,1088-6826},
   MRCLASS = {13F60},
  MRNUMBER = {4080883},
MRREVIEWER = {Christof\ Gei\ss},
       DOI = {10.1090/proc/14932},
       URL = {https://doi.org/10.1090/proc/14932},
}

@article {CL2020,
    AUTHOR = {Cao, Peigen and Li, Fang},
     TITLE = {Unistructurality of cluster algebras},
   JOURNAL = {Compos. Math.},
  FJOURNAL = {Compositio Mathematica},
    VOLUME = {156},
      YEAR = {2020},
    NUMBER = {5},
     PAGES = {946--958},
      ISSN = {0010-437X,1570-5846},
   MRCLASS = {13F60},
  MRNUMBER = {4089373},
MRREVIEWER = {L\'ea\ Bittmann},
       DOI = {10.1112/s0010437x20007113},
       URL = {https://doi.org/10.1112/s0010437x20007113},
}

@article {DL23,
    AUTHOR = {Dong, Jinlei and Li, Fang},
     TITLE = {Presentations of mapping class groups and an application to cluster algebras from surfaces},
    JOURNAL = {J. Algebra},
    VOLUME = {663},
      YEAR = {2025},
     PAGES = {882--912},
}

@article {FST12a,
    AUTHOR = {Felikson, Anna and Shapiro, Michael and Tumarkin, Pavel},
     TITLE = {Skew-symmetric cluster algebras of finite mutation type},
   JOURNAL = {J. Eur. Math. Soc. (JEMS)},
  FJOURNAL = {Journal of the European Mathematical Society (JEMS)},
    VOLUME = {14},
      YEAR = {2012},
    NUMBER = {4},
     PAGES = {1135--1180},
      ISSN = {1435-9855,1435-9863},
   MRCLASS = {13F60 (05C30 05E15)},
  MRNUMBER = {2928847},
MRREVIEWER = {Xueqing\ Chen},
       DOI = {10.4171/JEMS/329},
       URL = {https://doi.org/10.4171/JEMS/329},
}

@article {FST12b,
    AUTHOR = {Felikson, Anna and Shapiro, Michael and Tumarkin, Pavel},
     TITLE = {Cluster algebras of finite mutation type via unfoldings},
   JOURNAL = {Int. Math. Res. Not. IMRN},
  FJOURNAL = {International Mathematics Research Notices. IMRN},
      YEAR = {2012},
    NUMBER = {8},
     PAGES = {1768--1804},
      ISSN = {1073-7928,1687-0247},
   MRCLASS = {13F60},
  MRNUMBER = {2920830},
MRREVIEWER = {Yu\ Zhou},
       DOI = {10.1093/imrn/rnr072},
       URL = {https://doi.org/10.1093/imrn/rnr072},
}

@article {I20,
    AUTHOR = {Ishibashi, Tsukasa},
     TITLE = {Presentations of cluster modular groups and generation by
              cluster {D}ehn twists},
   JOURNAL = {SIGMA Symmetry Integrability Geom. Methods Appl.},
  FJOURNAL = {SIGMA. Symmetry, Integrability and Geometry. Methods and
              Applications},
    VOLUME = {16},
      YEAR = {2020},
     PAGES = {Paper No. 025, 22},
      ISSN = {1815-0659},
   MRCLASS = {13F60 (05E14 30F60)},
  MRNUMBER = {4082289},
MRREVIEWER = {Ashish\ K.\ Srivastava},
       DOI = {10.3842/SIGMA.2020.025},
       URL = {https://doi.org/10.3842/SIGMA.2020.025},
}
\end{document}